\theoremstyle{plain}
\newtheorem{theorem}{Theorem}[section]
\newtheorem*{theorem*}{Theorem}
\newtheorem{proposition}[theorem]{Proposition}
\newtheorem{lemma}[theorem]{Lemma}
\newtheorem{corollary}[theorem]{Corollary}
\DeclareMathOperator{\Hom}{Hom}
\DeclareMathOperator{\Ext}{Ext}
\DeclareMathOperator{\Proj}{\mathsf{Proj}}
\DeclareMathOperator{\thick}{\mathsf{thick}}
\DeclareMathOperator{\fgmod}{\mathsf{mod}}
\DeclareMathOperator{\stmod}{\mathsf{stmod}}
\DeclareMathOperator{\rel}{\mathsf{rel}}
\DeclareMathOperator{\strel}{\mathsf{strel}}
\DeclareMathOperator{\proj}{\mathsf{proj}}
\DeclareMathOperator{\Spc}{Spc}
\DeclareMathOperator{\supp}{supp}
\newcommand{\mathbbold}[1]{\mathbf{#1}}
\begin{document}

\title{The prime spectra of relative stable module categories}

\author{Shawn Baland}
\address{Department of Mathematics,
University of Washington, Box 354350, Seattle, WA 98195}
\email{shawn.baland@gmail.com}

\author{Alexandru Chirvasitu}
\address{Department of Mathematics,
University of Washington, Box 354350, Seattle, WA 98195}
\email{chirva@math.washington.edu}

\author{Greg Stevenson}
\address{Universit\"at Bielefeld, Fakult\"at f\"ur Mathematik,
BIREP Gruppe, Postfach 10\,01\,31, 33501 Bielefeld, Germany}
\email{gstevens@math.uni-bielefeld.de}

\thanks{The first and third authors are grateful to Universit\"at
  Bielefeld and were partially supported by CRC 701 during a portion
  of the period in which this research was conducted. The second
  author was partially supported by the NSF grant DMS-1565226. The third
  author was also partially supported by a fellowship from the Alexander
  von Humboldt Foundation.}

\dedicatory{To Dave Benson on the occasion of his 60th birthday}

\begin{abstract}
For a finite group $G$ and an arbitrary commutative ring $R$,
Brou\'{e} has placed a Frobenius exact structure
on the category of finitely generated $RG$-modules by taking the
exact sequences to be those that split upon restriction to the trivial
subgroup. The corresponding stable category is then tensor triangulated.
In this paper we examine the case $R=S/t^n$, where $S$ is a discrete
valuation ring having uniformising parameter $t$. We prove that the
prime ideal spectrum (in the sense of Balmer) of this `relative' version
of the stable module category of $RG$ is a disjoint union of $n$ copies
of that for $kG$, where $k$ is the residue field of $S$.
\end{abstract}

\maketitle

\tableofcontents

\thispagestyle{empty}

\section{Introduction}
Let $G$ be a finite group and $k$ a field whose characteristic divides
the order of $G$.
One of the main goals in modular representation theory is to try to
understand the stable module category $\stmod kG$ of $kG$.
The objects in $\stmod kG$ are the same as those in the category $\fgmod kG$
of finitely generated $kG$-modules.
If $M$ and $N$ are finitely generated $kG$-modules, then the morphisms
from $M$ to $N$ in $\stmod kG$ are elements of the quotient
\[
\underline{\Hom}_{kG}(M,N)=\Hom_{kG}(M,N)/\mathrm{PHom}_{kG}(M,N),
\]
where $\mathrm{PHom}_{kG}(M,N)$ denotes the set of $kG$-module
homomorphisms $M\rightarrow N$ that factor through some projective
$kG$-module.
In this case $\fgmod kG$ is a \emph{Frobenius category},
and so $\stmod kG$ is triangulated.
The suspension of an object $M$ in $\stmod kG$ is defined to be
its cosyzygy $\Omega^{-1}M$, the cokernel of the inclusion of $M$ into
its injective hull.
A distinguished triangle
\[
\xymatrix{
M' \ar@{>}[r] & M \ar@{>}[r] & M'' \ar@{>}[r] & \Omega^{-1}M'
}
\]
in $\stmod kG$ is, by definition, induced by a short exact sequence
\[
\xymatrix{
0 \ar@{>}[r] & M' \ar@{>}[r] & M \ar@{>}[r] & M'' \ar@{>}[r] & 0
}
\]
in $\fgmod kG$.
Further details may be found in \cite{Happel:1988a}, for example.
Moreover, equipping $-\otimes_k-$ with the diagonal $G$-action
gives an exact symmetric monoidal structure on $\fgmod kG$.
This passes down to $\stmod kG$, making it a
\emph{tensor triangulated category}.

Now suppose that we replace $k$ with an arbitrary commutative ring $R$.
Can one hope to study the category $\fgmod RG$ by mimicking the above setup?
The first obstruction to doing so is the fact that $\fgmod RG$ may no
longer be Frobenius, in which case $\stmod RG$ would fail to be
triangulated in the usual way.
Even if $\fgmod RG$ were Frobenius,
there would still be no guarantee that tensoring over $R$ would be exact,
that is, $\stmod RG$ might not be tensor triangulated.

Brou\'{e} \cite{Broue:2009a} has introduced an alternative exact structure
on $\fgmod RG$, which was based on work of Higman \cite{Higman:1954a}
and later examined by Benson, Iyengar and Krause
\cite{Benson/Iyengar/Krause:2013a}.
Let
\[
\xymatrix{
\iota\colon R \ar@{>}[r] & RG
}
\]
denote the inclusion of the ground ring $R$ and
\[
\xymatrix{
\iota_*\colon \fgmod RG \ar@{>}[r] & \fgmod R
}
\]
the restriction of scalars functor.
An exact sequence of $RG$-modules
\[
\xymatrix{
0 \ar@{>}[r] & M' \ar@{>}[r] & M \ar@{>}[r] & M'' \ar@{>}[r] & 0
}
\]
is defined to be admissible if its restriction
\[
\xymatrix{
0 \ar@{>}[r] & \iota_*M' \ar@{>}[r] & \iota_*M \ar@{>}[r] &
\iota_*M'' \ar@{>}[r] & 0
}
\]
is split exact.
In this paper we will denote the category of finitely generated
$RG$-modules endowed with this `relatively split' exact structure
by $\rel RG$.
As shown in \cite{Broue:2009a}, $\rel RG$ \emph{is} a Frobenius category,
so its stable category $\strel RG$ is triangulated.

Specifically, the injective/projective objects in
$\rel RG$ are the direct summands of those $RG$-modules
lying in the essential image of the induction functor
\[
\xymatrix{
\iota^*=RG\otimes_R-\colon\fgmod R \ar@{>}[r] & \fgmod RG.
}
\]
We shall call such modules
\emph{weakly projective}.
The suspension of an object $M$ in $\strel RG$ is the cokernel $\Sigma M$
in the $R$-split short exact sequence
\[
\xymatrix{
0 \ar@{>}[r] & M \ar@{>}[r] & \iota^*\iota_*M \ar@{>}[r] &
\Sigma M \ar@{>}[r] & 0,
}
\]
where the map $M\rightarrow\iota^*\iota_*M=RG\otimes_R M$ is given by
$m\mapsto\sum_{g\in G}g\otimes g^{-1}m$.
A nice feature of the relative stable category is that,
in the special case where $R=k$ is a field,
$\strel kG$ and $\stmod kG$ coincide.

Another motivation for appealing to this relative exact structure is that
tensoring over $R$ preserves $R$-split short exact sequences,
so $\strel RG$ is tensor triangulated with unit the trivial module $R$.
The relative stable category therefore provides a setting in which one may
exploit a monoidal structure to study representations of $G$ over arbitrary
commuta-
tive rings.

With the relative stable category in mind, we now recall that Balmer
\cite{Balmer:2005a} has developed a general framework for studying
the coarse structures of essentially small tensor triangulated categories
in terms of supports, namely the yoga of \emph{tensor triangular geometry}.
Let $(\mathcal{K},\otimes,\mathbbold{1})$ be an essentially small tensor
triangulated category. A thick subcategory $\mathcal{I}$ of
$\mathcal{K}$ is a \emph{tensor ideal} of $\mathcal{K}$ if
$\mathcal{K}\otimes\mathcal{I}\subseteq\mathcal{I}$.
Continuing the analogy with commutative algebra, Balmer defines
a proper tensor ideal $\mathcal{P}$ of $\mathcal{K}$ to be \emph{prime} if
whenever $x$ and $y$ are objects in $\mathcal{K}$ satisfying
$x\otimes y\in\mathcal{P}$, then $x\in\mathcal{P}$ or $y\in \mathcal{P}$.
The collection of prime ideals of $\mathcal{K}$ is called the
(\emph{prime ideal}) \emph{spectrum} of $\mathcal{K}$,
denoted $\Spc\mathcal{K}$.

For an object $x\in\mathcal{K}$, Balmer defines the \emph{support} of $x$
to be the subset
\[
\supp(x)=\{\mathcal{P}\in\Spc\mathcal{K}\mid x\notin\mathcal{P}\}
\]
of $\Spc\mathcal{K}$.
Such subsets form a basis of closed subsets of a Zariski topology on
$\Spc\mathcal{K}$.
The topological space $\Spc\mathcal{K}$ and the assignment
$x\mapsto\supp(x)$ form something Balmer calls a
\emph{universal support data} for $\mathcal{K}$.
Roughly speaking, this means that $\Spc\mathcal{K}$ is the best
possible abstract setting in which to study support theoretic questions
about the category $\mathcal{K}$.
For example, one may express the universality of $\Spc\mathcal{K}$ by
interpreting it as the space dual to the lattice of (radical) thick
tensor ideals.
Accordingly, any question about the structure of the lattice of
tensor ideals is really a question about $\Spc\mathcal{K}$.

To see all of this in action, we recall a celebrated result of
Benson, Carlson and Rickard \cite{Benson/Carlson/Rickard:1997a},
which states that the thick tensor ideals of $\stmod kG$ are in one to one
correspondence with the specialisation closed subsets of $\Proj H^*(G,k)$,
where $H^*(G,k)$ denotes group cohomology with coefficients in $k$, i.e.,
$\Ext_{kG}^*(k,k)$.
Statements that relate the structure of $\stmod kG$ to the geometry of
$\Proj H^*(G,k)$ occupy the realm of \emph{support theory}.
For example, the above correspondence assigns to each thick tensor ideal
$\mathcal{I}$ in $\stmod kG$ the subset $\bigcup_{M\in\mathcal{I}}V_G(M)$
of $\Proj H^*(G,k)$, where $V_G(M)$ is the \emph{support variety} of $M$.
(See \cite[Chapter 5]{Benson:1991b} for details.) Viewed in Balmer's
framework, this result may be reinterpreted as saying that
$(\Proj H^*(G,k),V_G(-))$ is the \emph{classifying support data} for
$\stmod kG$.
(See \cite[Section 5]{Balmer:2005a}.)
In particular, $\Spc(\stmod kG)$ is homeomorphic to $\Proj H^*(G,k)$.

We remark that little is known about relative stable categories in general.
The goal of this paper is to determine the prime ideal spectrum of
$\strel RG$ in perhaps the most basic non-trivial case,
namely that in which $R$ is the ring $\mathbb{Z}/p^n$,
where $p$ is a prime number and $n\geq 0$.

More generally, the proofs go through for $R=S/t^n$ where $S$ is a
discrete valuation ring with uniformising parameter $t$.
In that context, our main result is the following
(the above case being that of the localisation $S=\mathbb{Z}_{(p)}$
and $t=p$).

\begin{theorem}\label{thm:maintheorem}
  Let $S$ be a discrete valuation ring having residue field $k$ and
  uniformising parameter $t$.
  Setting $R_n=S/t^n$, there is a homeomorphism
\[
\Spc(\strel R_nG)\cong\coprod_{i=1}^n\Spc(\strel kG).
\]
In other words, the prime ideal spectrum of $\strel R_nG$ decomposes into
$n$ disjoint copies of the prime ideal spectrum of $\strel kG=\stmod kG$.
\end{theorem}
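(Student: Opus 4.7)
My plan is to construct, for each $i \in \{1,\ldots,n\}$, a tensor-triangulated functor $F_i \colon \strel R_nG \to \stmod kG$ capturing the $i$-th layer of the $t$-adic filtration, and then to show that the induced continuous maps on Balmer spectra assemble into a homeomorphism $\coprod_{i=1}^n \Spc(\stmod kG) \to \Spc(\strel R_nG)$.

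The level-one functor $F_1(M) = M \otimes_{R_n} k = M/tM$ is manifestly tensor: it sends each weakly projective $R_nG \otimes_{R_n} V$ to the free $kG$-module $kG \otimes_k (V/tV)$ and commutes with the suspension $\Sigma$, hence descends to a tensor-triangulated functor to $\stmod kG$. For $i > 1$, the naive candidate $M \mapsto t^{i-1}M/t^iM$ remains additive, kills weakly projectives, and commutes with $\Sigma$, but is \emph{not} tensor: for $m \in t^{i-1}M$ and $n \in t^{i-1}N$, the element $m\otimes n$ lives in $t^{2i-2}(M\otimes_{R_n}N)$, strictly deeper than the $i$-th layer once $i \geq 2$. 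The right fix should exploit the family of tensor-idempotents $R_i \in \strel R_nG$ with $R_i \otimes_{R_n} R_j \cong R_{\min(i,j)}$: the truncation $\Phi_i(M) = M \otimes_{R_n} R_i$ is a genuine tensor-triangulated functor $\strel R_nG \to \strel R_iG$, and composing it with an appropriate ``top layer'' functor $\strel R_iG \to \stmod kG$ — or arguing by induction on $n$ using $\Phi_{n-1}$ and separately constructing the missing top-level functor — should deliver the full family $F_1,\ldots,F_n$. As a sanity check, the naive layer functor already satisfies $F_i(R_j) = 0$ for $j < i$ and $F_i(R_j) \cong k$ for $j \geq i$, exactly the ``detection'' behaviour one wants.

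The key structural point for the spectrum is that the chain $\{R_i\}$ already stratifies $\Spc(\strel R_nG)$ into $n$ disjoint pieces. For any prime $\mathcal{P}$, the set $\{i : R_i \in \mathcal{P}\}$ is downward closed (because $R_i \otimes R_j \cong R_i$ whenever $i \leq j$), and since $R_n = \mathbbold{1} \notin \mathcal{P}$ it has the form $\{1,\ldots,\ell-1\}$ for a unique $\ell \in \{1,\ldots,n\}$, defining the \emph{level} $\ell(\mathcal{P})$. By Balmer's functoriality, each $F_i$ induces a continuous map $\Spc(\stmod kG) \to \Spc(\strel R_nG)$, and the detection values above show that its image lies in the level-$i$ stratum. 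The theorem then reduces to proving that for each $i$, $\Spc(F_i)$ restricts to a homeomorphism onto its stratum and that each level stratum is clopen. For the former, I would aim for a Benson--Carlson--Rickard-style classification of the thick tensor ideals of $\strel R_nG$ in terms of $n$-tuples of specialisation-closed subsets of $\Spc(\stmod kG)$, one per level.

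The principal obstacle is that the $R_i$ form a \emph{chain} under $\otimes_{R_n}$ rather than an orthogonal family, so the $n$-fold splitting is invisible at the level of the abelian category $\rel R_nG$ itself. What must make the theorem work is a ``stable orthogonalisation'' phenomenon in $\strel R_nG$: modulo weakly projective summands, the different layers should become genuinely independent, even though they are linked by the $R_i$'s at the abelian level. Making this precise — both in order to upgrade the naive non-tensor layer maps to bona fide tensor-triangulated functors $F_i$ for $i \geq 2$ and to verify the clopenness of the level partition on $\Spc(\strel R_nG)$ — is where I expect the bulk of the technical work to be required.
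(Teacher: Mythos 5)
Your high-level plan---build a tensor functor $F_i$ for each layer and show the induced maps assemble into a homeomorphism from a disjoint union of copies of $\Spc(\stmod kG)$---tracks the paper's argument closely, but two points need correction or completion.

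First, your claim that $M\mapsto t^{i-1}M/t^iM$ fails to be tensor is not correct. The phenomenon you observe, that $(t^{i-1}m)\otimes(t^{i-1}m')$ lies in $t^{2(i-1)}(M\otimes_{R_n}M')$, only shows that the \emph{tautological} bilinear map misses the $i$-th layer; it does not preclude a different coherence isomorphism. The assignment $[t^{i-1}m]\otimes_k[t^{i-1}m']\mapsto[t^{i-1}(m\otimes m')]$ is well defined and gives an isomorphism $F_i(M)\otimes_k F_i(M')\to F_i(M\otimes_{R_n}M')$, and this is exactly what the paper exploits (for the top layer) to show that $P(M)=t^{n-1}M$ is strong monoidal on $\mathcal{D}_n$ in Section~\ref{sec.monoidal}. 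So your proposed repair via $\Phi_i$ followed by a top-layer functor is sound, but the naive $F_i$ you abandoned was monoidal all along, just with a coherence datum other than the one you tested.

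Second, and more seriously, you flag the clopenness of the level strata as the crux but supply no argument for it, and a Benson--Carlson--Rickard-style classification is both overkill and not how the paper proceeds. The paper gets the clopen splitting from the semi-orthogonal-decomposition machinery of Benson--Iyengar--Krause: each surjection $A_i\to A_{i-1}$ induces a localisation sequence $\mathcal{K}_i\to\mathcal{D}_i\to\mathcal{D}_{i-1}$ with $\mathcal{K}_i=\ker(-\otimes\mathbbold{1}_{i-1})$ a tensor ideal, and their Theorem~A.5 (recalled here as Theorem~\ref{thm:decomposition}) gives $\Spc\mathcal{D}_i=\Spc\mathcal{K}_i\amalg\Spc\mathcal{D}_{i-1}$ as open-and-closed pieces; iterating gives the full decomposition of $\Spc\mathcal{D}_n$. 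Your anticipated ``stable orthogonalisation'' is real, but it is realised not by the $R_i$ (which, as you note, form a chain) but by the cosyzygies $W_i=\Omega_i^{-1}k$: these generate $\mathcal{K}_i$ as a thick tensor ideal (Lemma~\ref{lemma:kernelgenerated}) and satisfy $W_i\otimes W_j=0$ in $\mathcal{D}_n$ for $i\neq j$ (Lemma~\ref{lemma:cosyzygytensors}). That orthogonality is genuinely useful---it pins down the primes explicitly in Section~\ref{sec.cyclic}---but on its own it does not yield the topological splitting; the localisation-sequence input is what you are missing.
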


As mentioned above, the spectrum of $\stmod kG$ is known,
so the theorem yields a complete description of the spectrum of
$\strel R_nG$.

This computation is also valuable from the point of view of abstract
tensor triangular geometry.
There are many examples in which the spectrum has been computed,
but they all tend to share a common feature---the tensor triangulated
category in question is rigid.
Relative stable categories need not be rigid,
hence they provide a new family of examples that can be fed back into
the abstract theory.
For instance, if the spectrum of a rigid tensor triangulated category is
a disjoint union of subspaces, then the category itself decomposes into
a direct sum of subcategories indexed by those subspaces.
However, in our example the relative stable category is indecomposable.
The information encoded in the triviality of the topology of
the spectrum must therefore manifest in more subtle ways that would be
interesting to explore.

\section{Notation and preliminary calculations}\label{sec.notation}
Let $(S,\mathfrak{m},k)$ be a \emph{discrete valuation ring}, that is,
a local principal ideal domain whose unique maximal ideal is $\mathfrak{m}$
and whose residue field is
\[
k=S/\mathfrak{m}.
\]
(See \cite[Chapter 9]{Atiyah/MacDonald:1969a}.)
We denote by $t$ a \emph{uniformising parameter}, i.e.,
a generator of $\mathfrak{m}$.
For a positive integer $n$ we let
\[
R_n=S/t^n.
\]
Throughout this paper $G$ will denote a finite group.
We set
\[
A_n=R_nG,
\]
the group algebra of $G$ over $R_n$.
Keeping the notation from the introduction, we continue to denote
the inclusion of the ground ring by $\iota\colon R_n\hookrightarrow A_n$.
For each $i\leq n$, the canonical surjection $A_n\rightarrow A_i$ gives
$A_i$ the structure of an $A_n$-module.
We write $\Omega_i$ for the syzygy and $\Omega_i^{-1}$ for the cosyzygy,
taken with respect to the usual abelian structure in $\fgmod A_i$.

We dedicate the remainder of this section to module theoretic computations
that rely on the group structure of $G$.
The rest of the paper depends by and large only on the ring structure of $S$
and may be read almost independently of these results.

We begin with an explicit description of the weakly projective modules
in $\fgmod A_n$, that is, the injective/projective objects in $\rel A_n$.

\begin{proposition}\label{prop:weaklyprojectives}
Every weakly projective $A_n$-module is a direct sum of objects in
\[
\bigcup_{i=1}^n\proj A_i,
\]
where $\proj A_i$ is the full subcategory of finitely generated projective
$A_i$-modules.
\end{proposition}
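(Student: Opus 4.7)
The plan is to exploit the structure theorem for modules over the principal Artinian local ring $R_n$, and then finish with a Krull--Schmidt argument.

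First I would unfold the definition of weak projectivity: if $Q$ is weakly projective, then $Q$ is a direct summand of some $\iota^*M = A_n\otimes_{R_n}M$ with $M\in\fgmod R_n$. Since $R_n=S/t^n$ is a principal ideal ring, the structure theorem for finitely generated modules over it gives
\[
M\cong\bigoplus_{i=1}^n R_i^{a_i}, \qquad R_i=S/t^i.
\]
Using the identifications $A_n\otimes_{R_n}R_i\cong A_n/t^iA_n\cong A_i$, we obtain $\iota^*M\cong\bigoplus_{i=1}^n A_i^{a_i}$, and $Q$ is a direct summand of this module.

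Next I would invoke the Krull--Schmidt theorem in $\fgmod A_n$. This is available because $A_n=R_nG$ is finitely generated as a module over the (Artinian, hence complete) local ring $R_n$, so $A_n$ is semiperfect and the endomorphism ring of any finitely generated indecomposable $A_n$-module is local. Applied to a splitting $Q\oplus Q'\cong\bigoplus_{i=1}^nA_i^{a_i}$, it follows that each indecomposable direct summand of $Q$ is isomorphic to an indecomposable direct summand of some $A_i$.

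Finally I would note that since $t^i$ annihilates $A_i$, the $A_n$-module structure on $A_i$ factors through the surjection $A_n\twoheadrightarrow A_i$; hence any $A_n$-linear direct sum decomposition of $A_i$ is automatically an $A_i$-linear decomposition, and conversely. Consequently, the indecomposable $A_n$-summands of $A_i$ are precisely the indecomposable projective $A_i$-modules, and these lie in $\proj A_i$. Combining the preceding steps, $Q$ decomposes as a finite direct sum of objects in $\bigcup_{i=1}^n\proj A_i$, which is what the proposition asserts. The only real subtlety is checking the availability of Krull--Schmidt, but this is standard for module-finite algebras over a complete local ring, so I do not expect any serious obstacle.
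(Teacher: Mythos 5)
Your proof is correct and follows essentially the same route as the paper's: both reduce to the observation that a weakly projective module is a summand of $\iota^*N$, decompose $N$ into the indecomposable $R_n$-modules $R_i$, and use $A_n\otimes_{R_n}R_i\cong A_i$. The paper compresses the final step into ``the result follows,'' whereas you correctly make explicit the Krull--Schmidt argument (available since $A_n$ is an Artinian algebra) needed to pass from ``$Q$ is a summand of $\bigoplus_i A_i^{a_i}$'' to ``$Q$ is a direct sum of objects in $\bigcup_i\proj A_i$.''
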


\begin{proof}
As mentioned in the introduction, the weakly projective $A_n$-modules are
the direct summands of the modules in
\[
\{\iota^*N\mid N\in\fgmod R_n\}.
\]
(Recall that $\iota^*N$ is the induced module $A_n\otimes_{R_n}N$.)
The indecomposable $R_n$-modules are of the form $R_i=S/t^i$ for
$1\leq i\leq n$.
The result follows by noting that $A_n\otimes_{R_n}R_i=A_i$.
\end{proof}

We remark that since each $A_i$ is $R_i$-free,
every object in $\proj A_i$ is also $R_i$-free.

The main object of focus in the sequel will be the cosyzygy $\Omega_n^{-1}k$,
which is the cokernel in the short exact sequence of $A_n$-modules
\begin{equation}\label{eqn:cosyzygyses}
\xymatrix{
0 \ar@{>}[r] & k \ar@{>}[r] & A_n \ar@{>}[r] &
\Omega_n^{-1}k \ar@{>}[r] & 0,
}
\end{equation}
the map $k\rightarrow A_n$ being given by $1\mapsto t^{n-1}\sum_{g\in G}g$.
We first study the behaviour of $\Omega_n^{-1}k$ under base change.

\begin{lemma}\label{lemma:omegabasechange}
$\Omega_n^{-1}k\otimes_{R_n}R_{n-1}\cong A_{n-1}$.
\end{lemma}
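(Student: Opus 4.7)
The plan is to obtain the isomorphism by simply applying the right exact functor $-\otimes_{R_n}R_{n-1}$ to the defining short exact sequence \eqref{eqn:cosyzygyses}. Right exactness produces
\[
k\otimes_{R_n}R_{n-1}\longrightarrow A_n\otimes_{R_n}R_{n-1}\longrightarrow \Omega_n^{-1}k\otimes_{R_n}R_{n-1}\longrightarrow 0.
\]
The middle term is canonically identified with $A_{n-1}$ since $A_n\otimes_{R_n}R_{n-1}=R_{n-1}G=A_{n-1}$. The first term is (for $n\ge 2$) canonically identified with $k$, because $k=R_n/tR_n$ and $R_{n-1}=R_n/t^{n-1}R_n$, so the tensor product is $R_n/(t,t^{n-1})=R_n/tR_n=k$.

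The key point is then to observe that the induced map $k\to A_{n-1}$ is the zero map. Indeed, the original map $k\to A_n$ sends $1$ to $t^{n-1}\sum_{g\in G}g$, and after base change the element $t^{n-1}$ is sent to $0$ in $R_{n-1}$. Hence the image of the leftmost map in the above right-exact sequence vanishes, so the surjection $A_{n-1}\twoheadrightarrow \Omega_n^{-1}k\otimes_{R_n}R_{n-1}$ has trivial kernel and is therefore an isomorphism.

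There is no real obstacle here; the statement is essentially a direct consequence of right exactness together with the fact that the defining map has its image in $t^{n-1}A_n$, which is precisely the kernel of $A_n\twoheadrightarrow A_{n-1}$. One only needs to verify carefully that the identifications of $k\otimes_{R_n}R_{n-1}$ and $A_n\otimes_{R_n}R_{n-1}$ are the expected ones and that the map $k\to A_n$ is indeed killed after reduction modulo $t^{n-1}$, both of which are immediate.
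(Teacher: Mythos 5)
Your proposal is correct and matches the paper's proof almost exactly: both apply the right-exact functor $-\otimes_{R_n}R_{n-1}$ to the defining sequence \eqref{eqn:cosyzygyses}, observe that the map $k\to A_n$ factors through $t^{n-1}A_n$ and hence becomes zero after base change, and conclude that $A_{n-1}\to\Omega_n^{-1}k\otimes_{R_n}R_{n-1}$ is an isomorphism. The only (harmless) extra step you include is identifying $k\otimes_{R_n}R_{n-1}$ with $k$, which the paper does not bother with since vanishing of the map is all that is needed.
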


\begin{proof}
Since the embedding $k\hookrightarrow A_n$ maps into $t^{n-1}A_n$,
tensoring it with $R_{n-1}$ yields the zero map.
By the right exactness of $-\otimes_{R_n}R_{n-1}$,
the sequence (\ref{eqn:cosyzygyses}) gives rise to the exact sequence
\[
\xymatrix{
k \ar@{>}[r]^-{0} & A_{n-1} \ar@{>}[r] &
\Omega_n^{-1}k\otimes_{R_n}R_{n-1} \ar@{>}[r] & 0.
}
\]
The map $A_{n-1}\rightarrow\Omega_n^{-1}k\otimes_{R_n}R_{n-1}$
is therefore an isomorphism.
\end{proof}

The following ingredient will ensure that a special triangle exists in
$\strel A_n$.

\begin{lemma}\label{lemma:themagicsequence}
There exists an $R_n$-split short exact sequence of $A_n$-modules
\begin{equation}\label{eqn:themagicsequence}
\xymatrix{
0 \ar@{>}[r] & R_{n-1} \ar@{>}[r] & \Omega_n^{-1} k \ar@{>}[r] &
\Omega_n^{-1}R_n \ar@{>}[r] & 0.
}
\end{equation}
\end{lemma}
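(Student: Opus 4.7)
The plan is to deduce (\ref{eqn:themagicsequence}) from a snake lemma comparison of the defining sequence (\ref{eqn:cosyzygyses}) with an analogous cosyzygy sequence for $R_n$. First I would fix a concrete model for $\Omega_n^{-1}R_n$ by taking the cokernel of the $A_n$-linear embedding $R_n\hookrightarrow A_n$, $1\mapsto\sum_{g\in G}g$, which is $A_n$-linear since $\sum g$ is $G$-invariant. This gives a short exact sequence $0\to R_n\to A_n\to\Omega_n^{-1}R_n\to 0$ which represents a cosyzygy of $R_n$ in $\fgmod A_n$, since $A_n=R_nG$ is Frobenius (as $R_n$ is self-injective) and thus injective over itself.

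Next I would build the commutative diagram of $A_n$-modules with rows (\ref{eqn:cosyzygyses}) and the newly constructed sequence, whose left vertical is the map $k\to R_n$, $1\mapsto t^{n-1}$, and whose middle vertical is $\id_{A_n}$. The left square commutes because both composites send $1\in k$ to $t^{n-1}\sum_{g\in G}g\in A_n$. The left vertical is injective with cokernel $R_n/t^{n-1}R_n=R_{n-1}$, and the middle vertical is an isomorphism, so the snake lemma immediately yields
\[
0\to R_{n-1}\to\Omega_n^{-1}k\to\Omega_n^{-1}R_n\to 0
\]
as a short exact sequence of $A_n$-modules.

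It then remains to verify that this sequence is $R_n$-split, for which it suffices to show that $\Omega_n^{-1}R_n$ is projective over $R_n$. The $R_n$-linear projection $A_n\to R_n$, $\sum_{g}a_gg\mapsto a_e$, is a retraction of $R_n\to A_n$, $1\mapsto\sum_g g$, so the bottom row of the diagram already splits over $R_n$; hence $\Omega_n^{-1}R_n$ is a direct summand of the free $R_n$-module $A_n$, and in particular is $R_n$-free of rank $|G|-1$. The desired $R_n$-splitting follows. Honestly, there is no substantive obstacle here: the only real content is noticing that multiplying by $t^{n-1}$ lifts the embedding in (\ref{eqn:cosyzygyses}) from $k\to A_n$ through $R_n\to A_n$, after which the snake lemma and a one-line freeness argument do all the work.
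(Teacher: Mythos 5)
Your proof is correct, and it takes a genuinely different route from the paper's. The paper works at the level of syzygies: it first identifies $\Omega_n k$ and $\Omega_n R_n$ as explicit submodules of $A_n$, constructs by hand a surjection $\phi\colon \Omega_n k \to R_{n-1}$ (dividing the augmentation by $t$) whose kernel is $\Omega_n R_n$, observes that the resulting short exact sequence is $R_n$-split because $\Omega_n R_n$ is $R_n$-injective, and then applies $\Hom_{R_n}(-,R_n)$. This last step quietly uses that $A_n$ is a symmetric algebra, that $R_n$-dualisation exchanges $\Omega_n$ and $\Omega_n^{-1}$, and that $k$, $R_n$ and $R_{n-1}$ are all self-dual over $R_n$. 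You instead work entirely at the cosyzygy level: you set up the evident map of short exact sequences from (\ref{eqn:cosyzygyses}) to the analogous sequence $0\to R_n\to A_n\to\Omega_n^{-1}R_n\to 0$ via the lift $k\to R_n$, $1\mapsto t^{n-1}$ of the embedding of $k$ through $R_n$, and let the snake lemma produce the connecting exact sequence $0\to\operatorname{coker}(k\to R_n)\to\Omega_n^{-1}k\to\Omega_n^{-1}R_n\to 0$; the $R_n$-splitting then follows from the $R_n$-freeness of $\Omega_n^{-1}R_n$, which you get for free from the retraction $A_n\to R_n$, $\sum a_g g\mapsto a_e$. Your argument is more elementary in that it never invokes duality or self-injectivity of $A_n$, while the paper's approach has the virtue of making the comparison with the syzygy $\Omega_n$ explicit, which is used again in the computations of Section \ref{sec.cyclic}. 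Both are short, and both identify the same model of $\Omega_n^{-1}R_n$ (the cokernel of the norm map), so the statements match on the nose.
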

\begin{proof}
The submodule $\Omega_n k$ of $A_n$ is the collection of elements
$\sum_{g\in G}r_g g$ satisfying
\[
\sum_{g\in G}r_g\in tR_n.
\]
Consider the surjective $A_n$-module homomorphism
$\phi\colon\Omega_n k\rightarrow R_{n-1}$ given by
\[
\sum_{g\in G}r_g g\longmapsto\frac{1}{t}\sum_{g\in G}r_g\pmod{t^{n-1}}.
\]
The kernel of $\phi$ is the collection of elements
$\sum_{g\in G}r_g g$ satisfying $\sum_{g\in G}r_g=0$, which we identify
with $\Omega_nR_n$.
We therefore have a short exact sequence
\[
\xymatrix{
0 \ar@{>}[r] & \Omega_n R_n \ar@{>}[r] & \Omega_n k \ar@{>}[r] &
R_{n-1} \ar@{>}[r] & 0.
}
\]
Note that $\Omega_n R_n$ is injective as an $R_n$-module,
so this sequence is $R_n$-split.
The sequence (\ref{eqn:themagicsequence}) is obtained by applying
$\Hom_{R_n}(-,R_n)$, which preserves $R_n$-split exact sequences.
\end{proof}

Our final result of the section will be used later in Section \ref{sec.cyclic}
to establish certain orthogonality relations.

\begin{lemma}\label{lemma:cosyzygytensors}
For all $1\leq i< j\leq n$ we have
$\Omega_i^{-1}k\otimes_{R_n}\Omega_j^{-1}k\cong
A_{i-1}^{\vphantom{\oplus(|G|-1)}}\oplus A_i^{\oplus(|G|-1)}$.
\end{lemma}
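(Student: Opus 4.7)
The plan is to apply the functor $\Omega_i^{-1}k \otimes_{R_n} -$ to the defining short exact sequence
\[
0 \longrightarrow k \longrightarrow A_j \longrightarrow \Omega_j^{-1}k \longrightarrow 0,
\]
in which the first map sends $1 \in k$ to $t^{j-1}\sum_{g \in G} g$. The resulting Tor long exact sequence ends with
\[
\cdots \longrightarrow \Omega_i^{-1}k \otimes_{R_n} k \xrightarrow{\alpha} \Omega_i^{-1}k \otimes_{R_n} A_j \xrightarrow{\beta} \Omega_i^{-1}k \otimes_{R_n} \Omega_j^{-1}k \longrightarrow 0,
\]
and the first key step will be to observe that $\alpha$ vanishes. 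On an elementary tensor it acts by $x \otimes \bar 1 \mapsto t^{j-1} x \otimes \sum_g g$, and since $\Omega_i^{-1}k$ is an $A_i$-module it is annihilated by $t^i$; as $i < j$ gives $j-1 \geq i$, the factor $t^{j-1}x$ is zero. Combined with right exactness of the tensor product, this forces $\beta$ to be an isomorphism and reduces the problem to identifying $\Omega_i^{-1}k \otimes_{R_n} A_j$.

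For that identification, I would appeal to the standard untwisting isomorphism for diagonal actions: there is an $A_n$-linear iso $\Omega_i^{-1}k \otimes_{R_n} A_j \cong A_j \otimes_{R_n} \iota_*\Omega_i^{-1}k$, the target carrying the $A_n$-action on its $A_j$-factor alone, via $x \otimes g \mapsto g^{-1}x \otimes g$ on elements of the $G$-basis of $A_j$. Because $\iota_*\Omega_i^{-1}k$ is $t^i$-torsion and $i < j$, the $R_n$-tensor product collapses to $A_j \otimes_{R_j} \iota_*\Omega_i^{-1}k$, so the outcome depends only on the $R_n$-module structure of $\Omega_i^{-1}k$.

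To pin down that structure, I would return to the defining sequence $0 \to k \to A_i \to \Omega_i^{-1}k \to 0$ and choose an $R_i$-basis of $A_i$ beginning with $\sum_g g$. The image of $k$ then lies in the $R_i$-summand generated by $\sum_g g$, namely its submodule $t^{i-1} R_i \cdot \sum_g g$, so $\iota_*\Omega_i^{-1}k \cong R_{i-1} \oplus R_i^{|G|-1}$. Base-changing yields
\[
A_j \otimes_{R_j} \bigl(R_{i-1} \oplus R_i^{|G|-1}\bigr) \cong A_{i-1} \oplus A_i^{\oplus(|G|-1)},
\]
which is the desired conclusion.

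The main obstacle I anticipate is bookkeeping in the untwisting step: one must keep the diagonal and one-sided $G$-actions straight and verify that the reduction of scalars from $R_n$ to $R_j$ is compatible with those actions. By contrast, the crucial vanishing $\alpha = 0$ is short once spotted, and it is the conceptual input that allows the whole calculation to go through.
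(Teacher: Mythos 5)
Your proof is correct and follows essentially the same route as the paper's: both tensor the defining cosyzygy sequence for $\Omega_j^{-1}k$, show the resulting connecting map vanishes because $t^{j-1}$ annihilates the $t^i$-torsion module $\Omega_i^{-1}k$ (using $i<j$), and then identify the middle term $A_j\otimes\Omega_i^{-1}k$ via Frobenius reciprocity together with the identification $\iota_*\Omega_i^{-1}k\cong R_{i-1}\oplus R_i^{\oplus(|G|-1)}$. The only cosmetic differences are the order of reducing scalars from $R_n$ to $R_j$ relative to tensoring, and that you read off the $R$-module structure of $\Omega_i^{-1}k$ directly from its defining sequence rather than quoting the $R_n$-split sequence of Lemma~\ref{lemma:themagicsequence}.
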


\begin{proof}
Because $t^j$ annihilates both $R_i$ and $R_j$, we have
\[
\Omega_i^{-1}k\otimes_{R_n}\Omega_j^{-1}k\cong
\Omega_i^{-1}k\otimes_{R_j}\Omega_j^{-1}k.
\]
We may therefore compute the right hand term in $\fgmod A_j$.
Writing $f=\sum_{g\in G}g\in A_j$ and applying
$-\otimes_{R_j}\Omega_i^{-1}k$ to the short exact sequence
(\ref{eqn:cosyzygyses}) yields the exact sequence
\[
\xymatrix @C=0.6in{ k\otimes_{R_j}\Omega_i^{-1}k
\ar@{>}[r]^{t^{j-1}f\otimes 1\ } &
A_j\otimes_{R_j}\Omega_i^{-1}k \ar@{>}[r] &
\Omega_j^{-1}k\otimes_{R_j}\Omega_i^{-1}k \ar@{>}[r] & 0 }
\]
so that $\Omega_j^{-1}k\otimes_{R_j}\Omega_i^{-1}k$ is isomorphic to
$A_j\otimes_{R_j}\Omega_i^{-1}k$ modulo the image of $t^{j-1}f\otimes 1$.
By Frobenius reciprocity and the sequence (\ref{eqn:themagicsequence}),
one sees that
\begin{equation}
A_j\otimes_{R_j}\Omega_i^{-1}k\cong
A_{i-1}^{\vphantom{\oplus(|G|-1)}}\oplus A_i^{\oplus(|G|-1)}.
\end{equation}
Since $i<j$, $t^{j-1}$ annihilates the right hand term, so the
image of $t^{j-1}f\otimes 1$ is zero and
\[
\Omega_j^{-1}k\otimes_{R_j}\Omega_i^{-1}k\cong
A_{i-1}^{\vphantom{\oplus(|G|-1)}}\oplus A_i^{\oplus(|G|-1)}. \qedhere
\]
\end{proof}

\section{Localisation sequences and splitting of spectra}\label{sec:localisation}
To begin our discussion of triangulated categories, we now set
\[
\mathcal{D}_n=\strel A_n,
\]
the relative stable module category of $A_n$.
As noted in the introduction, this is a tensor triangulated category
with tensor unit
\[
\mathbbold{1}_n=R_n.
\]
As is customary, we denote the suspension in $\mathcal{D}_n$ by $\Sigma$,
keeping in mind that $\Sigma$ is not the cosyzygy $\Omega_n^{-1}$ in general.

Observe that it is not necessary to specify in which category $\Sigma$
is suspension.
Indeed, if $i\leq n$, then the canonical surjection $A_n\rightarrow A_i$
induces a fully faithful embedding $\mathcal{D}_i\subseteq\mathcal{D}_n$,
because a short exact sequence of $A_i$-modules is $R_i$-split if and only if
it is $R_n$-split.
Thus the suspension of an object in $\mathcal{D}_i$ will equal that in
$\mathcal{D}_n$.

An essential ingredient in proving Theorem \ref{thm:maintheorem} will be
the notion of a semi-orthogonal decomposition of a tensor triangulated category.
We recall that a \emph{localisation sequence} or \emph{semi-orthogonal decomposition} is
a diagram of exact functors
\[
\xymatrix{
\mathcal{R} \ar[r]<0.5ex>^-{\psi_*} \ar@{<-}[r]<-0.5ex>_-{\psi^!} &
\mathcal{S} \ar[r]<0.5ex>^-{\phi^*} \ar@{<-}[r]<-0.5ex>_-{\phi_*} &
\mathcal{T}
}
\]
in which $\psi^!$ is right adjoint to $\psi_*$ and $\phi_*$ is right adjoint
to $\phi^*$,
the functors $\psi_*$ and $\phi_*$ are fully faithful and there are equalities
\[
\phi_*\mathcal{T} = (\psi_*\mathcal{R})^\perp = \{x\in \mathcal{S} \mid
\Hom_\mathcal{S}(\psi_*\mathcal{R}, x) = 0\}
\]
and
\[
\psi_*\mathcal{R} = {}^\perp(\phi_*\mathcal{T}) = \{x\in \mathcal{S} \mid
\Hom_\mathcal{S}(x, \phi_*\mathcal{T}) = 0\}.
\]

For our purposes we also require that $\mathcal{S}$ is an essentially small
tensor triangulated category and that $\mathcal{R}$ and $\mathcal{T}$ are
tensor ideals of $\mathcal{S}$ under $\psi_*$ and $\phi_*$, respectively.
In this very special situation one also obtains a decomposition of the
prime ideal spectrum of $\mathcal{S}$.
If $\mathcal{C}$ is any subcategory of $\mathcal{S}$,
we define the \emph{support} of $\mathcal{C}$ to be the specialisation
closed subset
\[
\supp_{\mathcal{S}}\mathcal{C}=\bigcup_{x\in\mathcal{C}}\supp_{\mathcal{S}}x
\]
of $\Spc\mathcal{S}$.

\begin{theorem}[\cite{Benson/Iyengar/Krause:2013a}, Theorem A.5]
\label{thm:decomposition}
The subsets $\Spc\mathcal{R}=\supp_{\mathcal{S}}\mathcal{R}$ and
$\Spc\mathcal{T}=\supp_{\mathcal{S}}\mathcal{T}$ of $\Spc\mathcal{S}$
are open and closed, and there is a decomposition
\[
\Spc\mathcal{S}=\Spc\mathcal{R}\coprod\Spc\mathcal{T}.
\]
\end{theorem}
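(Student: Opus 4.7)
The plan is to manufacture a pair of tensor-orthogonal idempotents from the adjunction data and use them to partition the spectrum. Writing $\mathbbold{1}$ for the tensor unit of $\mathcal{S}$, the unit and counit of the two adjunctions assemble into a distinguished triangle
\[
e \to \mathbbold{1} \to f \to \Sigma e,
\]
with $e=\psi_*\psi^!\mathbbold{1}\in\psi_*\mathcal{R}$ and $f=\phi_*\phi^*\mathbbold{1}\in\phi_*\mathcal{T}$. The key computation is that $e\otimes f=0$: because both $\psi_*\mathcal{R}$ and $\phi_*\mathcal{T}$ are tensor ideals, the product lies in their intersection, and the semi-orthogonality $\phi_*\mathcal{T}=(\psi_*\mathcal{R})^\perp$ forces any $x\in\psi_*\mathcal{R}\cap\phi_*\mathcal{T}$ to satisfy $\Hom_{\mathcal{S}}(x,x)=0$, hence to vanish.

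Next I would show that $e$ tensor-generates $\psi_*\mathcal{R}$ and $f$ tensor-generates $\phi_*\mathcal{T}$. For any $r\in\mathcal{R}$, tensoring the triangle with $\psi_*r$ gives $f\otimes\psi_*r\in\phi_*\mathcal{T}\cap\psi_*\mathcal{R}=0$, so $\psi_*r\cong e\otimes\psi_*r$. It follows that
\[
\supp_{\mathcal{S}}\mathcal{R}=\supp_{\mathcal{S}}(e)\quad\text{and}\quad\supp_{\mathcal{S}}\mathcal{T}=\supp_{\mathcal{S}}(f),
\]
both of which are closed in $\Spc\mathcal{S}$ since supports of individual objects are closed by Balmer.

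For the disjoint decomposition, fix a prime $\mathcal{P}\in\Spc\mathcal{S}$. Since $e\otimes f=0\in\mathcal{P}$, primality gives $e\in\mathcal{P}$ or $f\in\mathcal{P}$; both cannot hold, for then the triangle $e\to\mathbbold{1}\to f$ and the thickness of $\mathcal{P}$ would force $\mathbbold{1}\in\mathcal{P}$, contradicting properness. Consequently $\Spc\mathcal{S}$ is the disjoint union of the complementary closed subsets $\supp_{\mathcal{S}}(e)$ and $\supp_{\mathcal{S}}(f)$, and each is therefore open as well.

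Finally, to identify each clopen piece with an actual spectrum, I would invoke the standard fact that the Verdier quotient $\mathcal{S}/\phi_*\mathcal{T}$ is equivalent to $\mathcal{R}$; under this equivalence, primes of $\mathcal{S}$ containing $\phi_*\mathcal{T}$, i.e.\ the points of $\supp_{\mathcal{S}}\mathcal{R}$, correspond bijectively to primes of $\mathcal{R}$, and functoriality of $\Spc$ promotes this to a homeomorphism. The symmetric argument takes care of $\mathcal{T}$. The load-bearing step is the orthogonality $e\otimes f=0$, which is where the tensor ideal hypothesis on \emph{both} summands of the decomposition is crucial; once this is in hand, the remainder is essentially formal bookkeeping with Balmer's machinery.
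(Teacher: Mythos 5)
This theorem is not proved in the paper; it is quoted verbatim as Theorem A.5 of Benson--Iyengar--Krause, so there is no in-paper proof to compare against. That said, your reconstruction is the standard Bousfield-idempotent argument and, as far as I recall, is essentially the one in the cited appendix. The load-bearing steps are all sound: the triangle $\psi_*\psi^!\mathbbold{1}\to\mathbbold{1}\to\phi_*\phi^*\mathbbold{1}$ is the localisation triangle for the unit, $e\otimes f$ lands in $\psi_*\mathcal{R}\cap\phi_*\mathcal{T}$ (here you correctly use that \emph{both} images are tensor ideals), and an object of that intersection has vanishing identity endomorphism by semi-orthogonality, hence is zero. Tensoring the triangle with $\psi_* r$ (respectively $\phi_* t$) then shows $e$ and $f$ tensor-generate the two ideals, giving $\supp_{\mathcal{S}}\mathcal{R}=\supp(e)$ and $\supp_{\mathcal{S}}\mathcal{T}=\supp(f)$, and the pair of Zariski-closed sets $\supp(e)$, $\supp(f)$ is complementary precisely because $e\otimes f=0$ forces each prime to contain exactly one of $e,f$.

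The only place I would urge a little more care is the last step. You invoke the Verdier equivalence $\mathcal{S}/\phi_*\mathcal{T}\simeq\mathcal{R}$ (which does hold here: $\ker\psi^!=(\psi_*\mathcal{R})^{\perp}=\phi_*\mathcal{T}$, and a triangulated functor with a fully faithful adjoint is a Verdier quotient onto its kernel) and then say ``functoriality of $\Spc$ promotes this to a homeomorphism.'' For that you need this to be an equivalence of \emph{tensor} triangulated categories, i.e.\ you need the tensor structure on $\mathcal{R}$ to match the one that $\mathcal{S}/\phi_*\mathcal{T}$ inherits as a quotient by a tensor ideal. This does not follow formally from the stated hypotheses alone; it is part of what ``\,$\mathcal{R}$ is a tensor ideal of $\mathcal{S}$ under $\psi_*$\,'' is implicitly packaging (in the paper's application, it holds because $\psi^!$ is strong monoidal, as noted in Section~\ref{sec:localisation}). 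It is worth saying a sentence about this compatibility rather than leaving it to ``functoriality,'' but this is a matter of exposition rather than a gap in the argument.
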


We now return to the categories
$\mathcal{D}_i=\strel A_i$, $i\geq 1$. As explained in
\cite[{Remark 6.10}]{Benson/Iyengar/Krause:2013a}, each canonical
ring epimorphism $\phi_i\colon A_{i} \to A_{i-1}$ induces a
localisation sequence
\[
\xymatrix{
\mathcal{K}_i \ar[r]<0.5ex>^-{{\psi_i}_*} \ar@{<-}[r]<-0.5ex>_-{{\psi_i}^!} &
\mathcal{D}_i \ar[r]<0.5ex>^-{{\phi_i}^*} \ar@{<-}[r]<-0.5ex>_-{{\phi_i}_*} &
\mathcal{D}_{i-1}
}
\]
where ${\phi_i}_*$ is restriction of scalars,
${\phi_i}^* = -\otimes\mathbbold{1}_{i-1} = -\otimes_{R_i}R_{i-1}$
and
\begin{equation}\label{eqn:kernel}
  \mathcal{K}_i = \ker(-\otimes\mathbbold{1}_{i-1}). 
\end{equation}
Note that the functors ${\phi_i}^*$ and ${\psi_i}^!$ are strong monoidal,
i.e., they preserve tensor products and units.
(See \cite[{Chapter XI.2}]{MacLane:1971a} for full details on strong monoidal
functors.) 
It follows that ${\psi_i}_*\mathcal{K}_i$ and ${\phi_i}_*\mathcal{D}_{i-1}$
are thick tensor ideals in $\mathcal{D}_i$.
We frequently identify $\mathcal{K}_i$ and $\mathcal{D}_{i-1}$ with their
images under these fully faithful embeddings.
We also remark in passing that $\mathcal{D}_1$ is just the usual stable module
category $\stmod kG$ of $kG$.
A slight generalisation of the conclusion of
\cite[{Remark 6.10}]{Benson/Iyengar/Krause:2013a} is the following result.

\begin{theorem}\label{thm:splitting}
Setting $\mathcal{K}_1=\mathcal{D}_1$, there is a decomposition of spectra
\[
\Spc\mathcal{D}_n=\coprod_{i=1}^n\Spc\mathcal{K}_i.
\]
\end{theorem}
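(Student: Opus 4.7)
The plan is a straightforward induction on $n$, with Theorem~\ref{thm:decomposition} as the only real ingredient. Everything needed to invoke that theorem at each stage has already been verified in the paragraphs immediately preceding the statement: for each $1<i\le n$, the localisation sequence $\mathcal{K}_i\to\mathcal{D}_i\to\mathcal{D}_{i-1}$ has $\mathcal{D}_i$ essentially small tensor triangulated, and both ${\psi_i}_*\mathcal{K}_i$ and ${\phi_i}_*\mathcal{D}_{i-1}$ are thick tensor ideals in $\mathcal{D}_i$---the former as the kernel of the strong monoidal functor ${\phi_i}^*$, the latter by the projection formula for the adjunction ${\phi_i}^*\dashv{\phi_i}_*$.

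Applying Theorem~\ref{thm:decomposition} to this sequence gives, for every $1<i\le n$,
$$\Spc\mathcal{D}_i=\Spc\mathcal{K}_i\coprod\Spc\mathcal{D}_{i-1}.$$
I would then induct on $n$. The base case $n=1$ is the tautology $\Spc\mathcal{D}_1=\Spc\mathcal{K}_1$, which holds by the stipulated convention $\mathcal{K}_1=\mathcal{D}_1$. For the inductive step, assuming $\Spc\mathcal{D}_{n-1}=\coprod_{i=1}^{n-1}\Spc\mathcal{K}_i$, I substitute this identification into the one-step splitting above to obtain the asserted $n$-fold coproduct.

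There is essentially no hard step here: the entire content of the argument has been offloaded to Theorem~\ref{thm:decomposition} and to the verifications preceding the statement. The one subtlety worth flagging is that, for the iteration to make sense, the copy of $\Spc\mathcal{D}_{i-1}$ produced at a given stage must be the intrinsic prime ideal spectrum of $\mathcal{D}_{i-1}$ (and not merely a homeomorphic image sitting inside $\Spc\mathcal{D}_i$), so that one can legitimately apply the theorem again to $\mathcal{D}_{i-1}$. This is exactly the form in which Theorem~\ref{thm:decomposition} is stated, so no extra bookkeeping is required and the induction goes through without obstruction.
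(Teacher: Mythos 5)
Your proof is correct and takes essentially the same route as the paper, which simply states that the result ``follows by induction on $n$ and the appropriate use of Theorem~\ref{thm:decomposition}.'' You have merely spelled out the induction and the hypotheses of Theorem~\ref{thm:decomposition} in more detail, which is a faithful expansion of the intended argument.
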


\begin{proof}
This follows by induction on $n$ and the appropriate use of
Theorem \ref{thm:decomposition}.
\end{proof}

In order to prove Theorem \ref{thm:maintheorem}, it therefore suffices to
show that each $\Spc\mathcal{K}_i$ is homeo-
morphic to $\Spc\mathcal{D}_1$.
To establish this fact, we first show that each $\mathcal{K}_i$ is equal to
the thick tensor ideal
\[
\thick_{\mathcal{D}_i}^\otimes(\Omega_i^{-1}k)
\]
of $\mathcal{D}_i$ generated by $\Omega_i^{-1}k$.
We then exhibit a monoidal equivalence between
$\thick^\otimes_{\mathcal{D}_i}(\Omega_i^{-1}k)$ and $\mathcal{D}_1$.

\section{The tensor ideal $\thick_{\mathcal{D}_n}^\otimes(W_n)$}\label{sec.thick} 
To simplify notation somewhat and to emphasise its central role
throughout the rest of the paper, we now set
\[
W_n=\Omega_n^{-1}k.
\]

\begin{lemma}\label{lemma:kernelgenerated}
We have $\mathcal{K}_n=\thick_{\mathcal{D}_n}^\otimes(W_n)$,
where $\mathcal{K}_n$ is as defined by (\ref{eqn:kernel}).
\end{lemma}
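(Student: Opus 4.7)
The plan is to prove the two inclusions separately, the second being the substantive one.

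For $\thick_{\mathcal{D}_n}^\otimes(W_n) \subseteq \mathcal{K}_n$, it suffices to check $W_n \in \mathcal{K}_n$, since $\mathcal{K}_n$ is a thick tensor ideal of $\mathcal{D}_n$. But Lemma \ref{lemma:omegabasechange} yields $W_n \otimes_{R_n} R_{n-1} \cong A_{n-1}$, which by Proposition \ref{prop:weaklyprojectives} is weakly projective, hence zero in $\mathcal{D}_n$.

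For the reverse inclusion $\mathcal{K}_n \subseteq \thick_{\mathcal{D}_n}^\otimes(W_n)$, my strategy is to show that $W_n \otimes M \cong \Sigma M$ in $\mathcal{D}_n$ for every $M \in \mathcal{K}_n$; then $M \cong \Sigma^{-1}(W_n \otimes M)$ evidently belongs to $\thick_{\mathcal{D}_n}^\otimes(W_n)$, as this thick tensor ideal is closed under $W_n \otimes -$ and under $\Sigma^{\pm 1}$. To produce this isomorphism I would invoke Lemma \ref{lemma:themagicsequence}: the $R_n$-split sequence (\ref{eqn:themagicsequence}) gives a distinguished triangle
\[
R_{n-1} \longrightarrow W_n \longrightarrow \Omega_n^{-1}R_n \longrightarrow \Sigma R_{n-1}
\]
in $\mathcal{D}_n$. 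Before tensoring with $M$, I would first identify $\Omega_n^{-1}R_n \cong \Sigma\mathbbold{1}_n$ in $\mathcal{D}_n$ by applying the exact duality $\Hom_{R_n}(-,R_n)$ to the $R_n$-split augmentation sequence $0 \to \Omega_n R_n \to A_n \to R_n \to 0$; since this functor preserves $R_n$-splittings and fixes $A_n$ up to isomorphism via the symmetric Frobenius structure of $A_n$ over $R_n$, one obtains an $R_n$-split sequence $0 \to R_n \to A_n \to \Omega_n^{-1}R_n \to 0$ whose associated triangle collapses (as $A_n$ vanishes in $\mathcal{D}_n$) to the required isomorphism.

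With this identification the triangle above becomes $R_{n-1} \to W_n \to \Sigma\mathbbold{1}_n \to \Sigma R_{n-1}$, and tensoring with $M \in \mathcal{K}_n$ yields
\[
R_{n-1} \otimes M \longrightarrow W_n \otimes M \longrightarrow \Sigma M \longrightarrow \Sigma(R_{n-1} \otimes M),
\]
in which the outer terms vanish because $R_{n-1} \otimes M = 0$ by the definition of $\mathcal{K}_n$ recalled in (\ref{eqn:kernel}). This forces $W_n \otimes M \cong \Sigma M$, completing the argument. The only delicate step is the identification $\Omega_n^{-1}R_n \cong \Sigma\mathbbold{1}_n$ in $\mathcal{D}_n$, which rests on the Frobenius structure of $A_n$ over $R_n$; once it is in place, the rest of the proof is an essentially formal consequence of the magic sequence of Lemma \ref{lemma:themagicsequence}.
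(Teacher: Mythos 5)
Your proposal is correct and follows essentially the same route as the paper: check $W_n\in\mathcal{K}_n$ via Lemma~\ref{lemma:omegabasechange}, then tensor the triangle coming from Lemma~\ref{lemma:themagicsequence} with $X\in\mathcal{K}_n$ to conclude $\Sigma X\cong X\otimes W_n$. The only divergence is that you justify the identification $\Omega_n^{-1}R_n\cong\Sigma\mathbbold{1}_n$ by dualising the augmentation sequence, whereas the paper treats this as implicit; your duality argument is valid, though it can be shortened to the observation that the map $R_n\to\iota^*\iota_*R_n=A_n$, $1\mapsto\sum_{g\in G}g$, defining $\Sigma\mathbbold{1}_n$ is literally the injective hull embedding defining $\Omega_n^{-1}R_n$.
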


\begin{proof}
By Lemma \ref{lemma:omegabasechange} we have $W_n\otimes\mathbbold{1}_{n-1}=0$
in $\mathcal{D}_n$ so that
$\thick_{\mathcal{D}_n}^\otimes(W_n)\subseteq\mathcal{K}_n$.

Conversely, let $X\in\mathcal{K}_n$. The $R_n$-split short exact sequence
(\ref{eqn:themagicsequence}) induces a triangle
\[
\xymatrix{
\mathbbold{1}_{n-1} \ar@{>}[r] & W_n \ar@{>}[r] &
\Sigma\mathbbold{1}_n \ar@{>}[r] &
}
\]
in $\mathcal{D}_n$.
Since $X\in\mathcal{K}_n$, tensoring this triangle with $X$ yields
a triangle of the form
\[
  \xymatrix{ 0 \ar@{>}[r] & X\otimes W_n \ar@{>}[r] & \Sigma X
    \ar@{>}[r] & }.
\]
It follows that $\Sigma X\cong X\otimes W_n$ so that
$X\in\thick_{\mathcal{D}_n}^\otimes(W_n)$.
\end{proof}

\begin{lemma}\label{lemma:abeliangroupdecomp}
  Every object in $\mathcal{K}_n$ is isomorphic in $\mathcal{D}_n$ to
  an $A_n$-module whose $S$-module decomposition contains only summands
  of the form $R_n$ and $R_{n-1}$.
\end{lemma}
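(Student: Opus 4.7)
My plan is to show that every $X\in\mathcal{K}_n$ decomposes in $\fgmod A_n$ as $X\cong Y\oplus Q$ with $Q$ weakly projective and $Y$ having only $R_n$- and $R_{n-1}$-summands as an $R_n$-module, which then gives $X\cong Y$ in $\mathcal{D}_n$. Since $X\in\mathcal{K}_n$, the quotient $X/t^{n-1}X=X\otimes_{R_n}R_{n-1}$ vanishes in $\mathcal{D}_{n-1}$ by Lemma~\ref{lemma:kernelgenerated} combined with Lemma~\ref{lemma:omegabasechange}, so it is weakly projective in $\rel A_{n-1}$; by Proposition~\ref{prop:weaklyprojectives} we may write $X/t^{n-1}X\cong\bigoplus_{j=1}^{n-1}A_j^{b_j}$. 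Set $Q:=\bigoplus_{j<n-1}A_j^{b_j}$, which is weakly projective in $\rel A_n$ by Proposition~\ref{prop:weaklyprojectives} and hence projective (equivalently, injective) in the Frobenius category $\rel A_n$. Let $\pi\colon X\to Q$ denote the composite of the canonical quotient $X\twoheadrightarrow X/t^{n-1}X$ with the summand projection onto $Q$.

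The main step is to verify that $\pi$ is an admissible (i.e., $R_n$-split) epimorphism. Granted this, projectivity of $Q$ in $\rel A_n$ splits the admissible short exact sequence $0\to\ker\pi\to X\to Q\to 0$ as $A_n$-modules, yielding $X\cong Y\oplus Q$ with $Y=\ker\pi$. Writing $X\cong\bigoplus_i R_i^{a_i}$ as $R_n$-modules and comparing multiplicities on both sides using Krull-Schmidt over the local ring $R_n$, together with the fact that each $A_j$ is $R_j$-free of rank $|G|$, one reads off that $|G|\mid a_i$ for every $i<n-1$ and that $Y\cong R_{n-1}^{a_{n-1}}\oplus R_n^{a_n}$ as an $R_n$-module, which is the desired conclusion.

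The main obstacle is the $R_n$-splitness of $\pi$. My approach would be to build an $R_n$-section $s\colon Q\to X$ summand-by-summand: for a generator $q$ of a cyclic $R_j$-summand of $Q$ (with $j<n-1$), choose any lift $y_0\in X$ with $\overline{y_0}=q$ in $X/t^{n-1}X$, and write $t^j y_0=t^{n-1}z_0$, which is possible because $t^j q=0$ forces $t^j y_0\in t^{n-1}X$. The corrected element $y=y_0-t^{n-1-j}z_0$ is annihilated by $t^j$, and the remaining task is to use the freedom to shift $z_0$ within its coset modulo the $t^{n-1}$-torsion submodule $X[t^{n-1}]$ so that the error term $t^{n-1-j}\overline{z_0}\in X/t^{n-1}X$ lies in the complementary direct summand $A_{n-1}^{b_{n-1}}$, ensuring $\pi(y)=q$. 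Arranging these lifts consistently across all generators of $Q$, and verifying that the required cancellation inside $X/t^{n-1}X$ can always be produced from $X[t^{n-1}]$, is the technically delicate heart of the argument.
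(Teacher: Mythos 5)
Your proposal follows the same overall strategy as the paper: reduce modulo $t^{n-1}$, invoke Proposition~\ref{prop:weaklyprojectives} to decompose $X\otimes_{R_n}R_{n-1}$ into a $\proj A_{n-1}$ part and a low-order part $Q\cong\bigoplus_{j<n-1}A_j^{b_j}$, kill $Q$ in $\mathcal{D}_n$, and compare $S$-module multiplicities at the end (that last bookkeeping is correct). But the proposal has a genuine gap at exactly the point you flag: verifying that the epimorphism $\pi\colon X\to Q$ is $R_n$-split. Your plan---lift a generator $q$ of an $R_j$-summand to $y_0\in X$, correct it by an element of $t^{n-1}X$ so that it has the right annihilator, and then ``shift $z_0$ within its coset modulo $X[t^{n-1}]$'' to force the error $t^{n-1-j}\overline{z_0}$ into the complementary summand---is not established, and it is not clear that it can be. The freedom you have (shifting $z_0$ by $X[t^{n-1}]$) does not obviously reach every element of the $A_{n-1}^{b_{n-1}}$-summand that you would need to cancel, and making all these choices simultaneously compatible across all generators of $Q$ is exactly the substantive content of the lemma. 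As it stands, this is an incomplete proof, not a proof with a routine gap.

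The paper avoids constructing a section entirely, and the manoeuvre is worth internalising. Write $\iota_*X = U\oplus V$ with $U = R_n^{\oplus r_n}\oplus R_{n-1}^{\oplus r_{n-1}}$ and $V$ the sum of lower-order pieces. Since $t^{n-1}X\subseteq U$, one has $\iota_*(X/t^{n-1}X)=(U/t^{n-1}X)\oplus V$ as $S$-modules. The paper then transports the $A_n$-module structures of the two weakly projective summands onto the \emph{actual subsets} $U/t^{n-1}X$ and $V$ (using the $S$-isomorphisms coming from Krull--Schmidt), producing $A_n$-modules $\widetilde{U/t^{n-1}X}$ and $\widetilde{V}$ whose underlying abelian groups are those subsets. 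Now instead of projecting $X$ onto the low-order part, consider the kernel $X'$ of the composite $X\to X/t^{n-1}X\to\widetilde{V}$. Because $\ker\mathrm{pr}_2$ has underlying set $U/t^{n-1}X$ and the quotient $X\to X/t^{n-1}X$ is by $t^{n-1}X\subseteq U$, one gets $X'=U$ \emph{literally} as abelian groups. Hence applying $\iota_*$ to $0\to X'\to X\to\widetilde{V}\to 0$ produces $0\to U\to U\oplus V\to V\to 0$, which is split by inspection---no section needs to be constructed. This gives an admissible short exact sequence, hence a triangle $X'\to X\to\widetilde{V}\to$ in $\mathcal{D}_n$, and $\widetilde{V}$ is weakly projective so $X\cong X'$ in $\mathcal{D}_n$. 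The point is that the $R_n$-splitness is read off from the $S$-module direct sum decomposition of $X$ itself, rather than having to be manufactured. You should replace your section-building argument with this kernel-level observation.
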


\begin{proof}
  Let $X\in\mathcal{K}_n$.
  Viewing $X$ as an $A_n$-module, write
  $\iota_*X\cong\bigoplus_{i=1}^n R_i^{\oplus r_i}$.
  Let
\[
U=R_n^{\oplus r_n}\oplus R_{n-1}^{\oplus r_{n-1}}\qquad\text{and}\qquad
V=\bigoplus_{i=1}^{n-2} R_i^{\oplus r_i}
\]
  so that $\iota_*X=U\oplus V$.
  Note that $t^{n-1}X\subseteq U$ since $t^{n-1}$ annihilates $V$.
  We thus have
\[
\iota_*(X\otimes_{R_n}R_{n-1})=(U/t^{n-1}X)\oplus V.
\]
By assumption, $X\otimes\mathbbold{1}_{n-1}=0$ in $\mathcal{D}_n$, so
$X\otimes_{R_n}R_{n-1}$ is weakly projective.
Proposition \ref{prop:weaklyprojectives} implies that
\[
  X\otimes_{R_n}R_{n-1}\cong Y\oplus Z,
\]
where $Y\in\proj A_{n-1}$ and $Z$ is a direct sum of objects in
$\bigcup_{i=1}^{n-2}\proj A_i$.
Comparing $S$-mod-
ule structures, we must have
\[
\iota_*Y\cong U/t^{n-1}X\qquad\text{and}\qquad\iota_*Z\cong V.
\]
These $S$-module isomorphisms allow us to place $A_n$-module structures
$\widetilde{U/t^{n-1}X}$ and $\widetilde{V}$ on $U/t^{n-1}X$ and $V$,
respectively, through which
$X\otimes_{R_n}R_{n-1}=\widetilde{U/t^{n-1}X}\oplus\widetilde{V}$.

Now consider the short exact sequence of $A_n$-modules
\begin{equation}\label{eqn:splitsequenceforkernel}
\xymatrix{
0 \ar@{>}[r] & X' \ar@{>}[r] & X \ar@{>}[r] & \widetilde{V} \ar@{>}[r] & 0,
}
\end{equation}
where the right hand map is the composition
\[
\xymatrix{
X \ar@{>}[r]^{\pi\ \ \ \ \ \ \ } &
X\otimes_{R_n}R_{n-1} \ar@{>}[r]^{\ \ \ \ \ \ \mathrm{pr}_2} & \widetilde{V}.
}
\]
Observe that we have $X'=\{m\in X\mid\pi(m)\in\ker\mathrm{pr}_2\}=U$
as abelian groups.
Applying $\iota_*$ to the sequence (\ref{eqn:splitsequenceforkernel})
therefore yields a split short exact sequence of $S$-modules
\[
\xymatrix{
0 \ar@{>}[r] & U \ar@{>}[r] & \iota_*X \ar@{>}[r] & V \ar@{>}[r] & 0.
}
\]
This means that (\ref{eqn:splitsequenceforkernel}) gives rise to a triangle
\[
\xymatrix{
X' \ar@{>}[r] & X \ar@{>}[r] & \widetilde{V} \ar@{>}[r] &
}
\]
in $\mathcal{D}_n$.
Because $\widetilde{V}\cong Z$ is weakly projective in $\fgmod A_n$,
we have $\widetilde{V}=0$ in $\mathcal{D}_n$.
It follows that $X'\cong X$ in $\mathcal{D}_n$.
\end{proof}

\begin{lemma}\label{lemma:syzygykernel}
We have $\Omega_n\mathcal{K}_n=\mathcal{D}_1$.
\end{lemma}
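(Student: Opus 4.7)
The plan is to establish both containments of $\Omega_n\mathcal{K}_n=\mathcal{D}_1$ separately. The main tools are Lemma~\ref{lemma:abeliangroupdecomp} (to replace a general object of $\mathcal{K}_n$ by a convenient representative), the fact that $A_n=R_nG$ is a Frobenius algebra over the self-injective ring $R_n$, and Schanuel's lemma for $R_n$-modules.

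For the containment $\mathcal{D}_1\subseteq\Omega_n\mathcal{K}_n$, I take $Y\in\mathcal{D}_1$ and view it as a $t$-torsion $A_n$-module via $A_n\twoheadrightarrow kG$. Let $Y\hookrightarrow I$ be an injective hull in $\fgmod A_n$; since $A_n$ is Frobenius over $R_n$, the module $I$ is $A_n$-projective and in particular $R_n$-free. Because $tY=0$, the image of $Y$ lies in the socle $I[t]=t^{n-1}I$. Tensoring the sequence $0\to Y\to I\to\Omega_n^{-1}Y\to 0$ over $R_n$ with $R_{n-1}$ therefore kills the map out of $Y$, so $\Omega_n^{-1}Y\otimes_{R_n}R_{n-1}\cong I/t^{n-1}I$. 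The right hand side is $A_{n-1}$-projective and hence weakly projective, proving $\Omega_n^{-1}Y\in\mathcal{K}_n$. The standard isomorphism $\Omega_n\Omega_n^{-1}Y\cong Y$ in $\stmod A_n$ descends to $\mathcal{D}_n$ because $A_n$-projectives are weakly projective, so $Y\in\Omega_n\mathcal{K}_n$.

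For the containment $\Omega_n\mathcal{K}_n\subseteq\mathcal{D}_1$, by Lemma~\ref{lemma:abeliangroupdecomp} I may assume $\iota_* X=R_n^{r_n}\oplus R_{n-1}^{r_{n-1}}$. Then $X\otimes R_{n-1}=X/t^{n-1}X$ has $R_{n-1}$-structure $R_{n-1}^{r_n+r_{n-1}}$ and is weakly projective in $\mathcal{D}_{n-1}$, so by Proposition~\ref{prop:weaklyprojectives} it is $A_{n-1}$-projective; write $X/t^{n-1}X\cong A_{n-1}^q$. Comparing $R_n$-lengths yields $q|G|=r_n+r_{n-1}$. Lifting the composite $A_n^q\twoheadrightarrow A_{n-1}^q\cong X/t^{n-1}X$ via projectivity produces a map $\phi\colon A_n^q\to X$, and surjectivity of $\phi$ follows by noting that its image $M$ satisfies $M+t^{n-1}X=X$, whence applying $t^{n-1}$ and using $t^{2(n-1)}X=0$ (valid for $n\geq 2$) yields $t^{n-1}X=t^{n-1}M\subseteq M$, so $M=X$.

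The main technical step, which is the crux of the argument, is identifying the $R_n$-structure of $K=\ker\phi$. Applying Schanuel's lemma to the $R_n$-linear surjection $R_n^{q|G|}\cong\iota_*(A_n^q)\twoheadrightarrow R_n^{r_n}\oplus R_{n-1}^{r_{n-1}}$, and comparing with the minimal such surjection from $R_n^{r_n+r_{n-1}}$ (whose kernel is $k^{r_{n-1}}$ by direct inspection via Smith normal form over the DVR-like ring $R_n$), one obtains $\iota_* K\cong R_n^{q|G|-r_n-r_{n-1}}\oplus k^{r_{n-1}}=k^{r_{n-1}}$. Hence $K$ is annihilated by $t$ and is therefore a $kG$-module; moreover its underlying $R_n$-module has no free summand, so $K$ has no $A_n$-projective summand and $\phi$ is a minimal $A_n$-projective cover. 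Consequently $\Omega_n X\cong K$ is a $kG$-module, placing it in $\mathcal{D}_1$.
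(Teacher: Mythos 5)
Your first containment ($\mathcal{D}_1\subseteq\Omega_n\mathcal{K}_n$) is correct and is essentially the paper's argument, phrased with an injective hull rather than an explicit free embedding.

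The second containment has a genuine gap. You write ``$X/t^{n-1}X$ is $A_{n-1}$-projective; write $X/t^{n-1}X\cong A_{n-1}^q$,'' but projective modules over $A_{n-1}=R_{n-1}G$ need \emph{not} be free. (For example, with $G=S_3$ and $\operatorname{char}k=3$, the projective indecomposable $kS_3$-modules have dimension $3$, which does not divide $|G|=6$; lifting these to $R_{n-1}S_3$ gives non-free projectives.) Your entire Schanuel computation hinges on the length count $q|G|=r_n+r_{n-1}$, which is equivalent to this freeness. When $X/t^{n-1}X$ is projective but not free and you instead take $q$ to be a number of $A_{n-1}$-generators, the Schanuel argument produces $\iota_*K\cong R_n^{q|G|-r_n-r_{n-1}}\oplus k^{r_{n-1}}$ with a nonzero free part, so $K$ is \emph{not} killed by $t$, the ``no free summand'' claim fails, $\phi$ is not a projective cover, and you cannot conclude $\Omega_nX\cong K$.

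The fix is exactly what the paper does: since $A_n\to A_{n-1}$ has nilpotent kernel, lift the idempotents presenting $X\otimes_{R_n}R_{n-1}\cong\bigoplus_jA_{n-1}e_j$ to idempotents $f_j\in A_n$, and work with $Y=\bigoplus_jA_nf_j$ in place of $A_n^q$. Once you have the lift $\psi\colon Y\to X$, the Schanuel detour is unnecessary anyway: $\Omega_nX=\ker\psi$ is contained in $\ker(Y\to Y/t^{n-1}Y)=t^{n-1}Y$, which is already visibly annihilated by $t$. This direct observation is both simpler and immune to the freeness issue.
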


\begin{proof}
Let $X\in\mathcal{K}_n$.
To prove that $\Omega_n X\in\mathcal{D}_1$, it suffices to show that
$t$ annihilates $\Omega_n X$.
Employing Lemma \ref{lemma:abeliangroupdecomp}, we assume that the
  $S$-module decomposition of $\iota_*X$ is
  $R_n^{\oplus r}\oplus R_{n-1}^{\oplus s}$ for some non-negative integers
  $r$ and $s$.
We then have
\begin{equation}\label{eqn:basechangegroupdecomp}
\iota_*(X\otimes_{R_n}R_{n-1})\cong R_{n-1}^{\oplus(r+s)}.
\end{equation}
On the other hand, $X\otimes_{R_n}R_{n-1}$ is known to be weakly projective.
The decomposition (\ref{eqn:basechangegroupdecomp}) implies that
$X\otimes_{R_n}R_{n-1}\in\proj A_{n-1}$.
We thus have $X\otimes_{R_n}R_{n-1}\cong\bigoplus_j A_{n-1}e_j$
for some idempotents $e_j\in A_{n-1}$.
It is known (see \cite[Theorem 21.28]{Lam:2001a}) that since 
the extension of scalars $A_n\rightarrow A_{n-1}$ is a surjective
ring homomorphism with nilpotent kernel,
the $e_j$ lift to idempotents $f_j\in A_n$.

Now consider the projective module $Y=\bigoplus_j A_nf_j$. Let
\[
\xymatrix{
\pi\colon X \ar@{>}[r] & X\otimes_{R_n}R_{n-1}
}
\qquad\text{and}\qquad
\xymatrix{
\phi\colon Y \ar@{>}[r] & X\otimes_{R_n}R_{n-1}
}
\]
be the $A_n$-module homomorphisms induced by the extension of scalars
from $R_n$ to $R_{n-1}$.
Because $\pi$ is surjective and $Y$ is projective, $\phi$ lifts to an
$A_n$-module homomorphism
\[
\xymatrix{
\psi\colon Y \ar@{>}[r] & X
}
\]
satisfying $\pi\circ\psi=\phi$. Note that $\psi\otimes_{R_n}R_{n-1}$
is the identity on $X\otimes_{R_n}R_{n-1}$; in particular, it is surjective.
It follows by Nakayama's lemma that $\psi$ is surjective.
We therefore have a short exact sequence
\[
\xymatrix{
0 \ar@{>}[r] & \Omega_n X \ar@{>}[r] & Y \ar@{>}[r]^\psi &
X \ar@{>}[r] & 0.
}
\]
But $\Omega_n X$ is also contained in the kernel of the composition
$\pi\circ\psi=\phi$.
The kernel of the latter is annihilated by $t$, hence the same is true of
its submodule $\Omega_n X$.

Conversely, let $X\in\mathcal{D}_1$ and consider the short exact sequence
\[
\xymatrix{
0 \ar@{>}[r] & X \ar@{>}[r]^-\phi & A_n^{\oplus r} \ar@{>}[r]^-\psi &
\Omega^{-1}_nX \ar@{>}[r] & 0
}
\]
defining a cosyzygy of $X$ in $\fgmod A_n$.
Since $X$ lies in $\mathcal{D}_1$, we know that $\iota_*X$ is
a $k$-vector space, so the image of $\phi$ is contained in
$t^{n-1}A_n^{\oplus r}$.
This means that $\phi\otimes_{R_n}R_{n-1}=0$,
hence $\psi\otimes_{R_n}R_{n-1}$ is an isomorphism.
We therefore have
$\Omega^{-1}_nX\otimes_{R_n}R_{n-1}\cong A_{n-1}^{\oplus r}$,
and the latter is weakly projective.
This shows that $\Omega^{-1}_nX$ lies in $\mathcal{K}_n$ as claimed.
\end{proof}

We are now ready for the main theorem of this section.

\begin{theorem}\label{thm:thicksameastensor}
  The syzygy $\Omega_n$ induces an equivalence of triangulated categories
\[
  \thick^\otimes_{\mathcal{D}_n}(W_n)\cong\thick^\otimes_{\mathcal{D}_n}(k)=
  \stmod kG.
\]
\end{theorem}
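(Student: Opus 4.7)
The plan is to realise the abelian-category syzygy $\Omega_n$ as a triangulated autoequivalence of the whole category $\mathcal{D}_n$ that restricts to the desired equivalence, together with a separate identification of $\thick^\otimes_{\mathcal{D}_n}(k)$ with $\mathcal{D}_1=\stmod kG$.

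For the identification of the thick tensor ideal, I would first observe that $\mathcal{D}_1$ is a thick tensor ideal of $\mathcal{D}_n$—this is immediate from iterating the localisation sequences of Section \ref{sec:localisation}, since each $\mathcal{D}_{i-1}$ sits inside $\mathcal{D}_i$ as a thick tensor ideal. Inside $\mathcal{D}_1$, the unit object $k$ trivially generates everything as a tensor ideal, so the two thick tensor ideals $\thick^\otimes_{\mathcal{D}_n}(k)$ and $\mathcal{D}_1$ coincide.

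For the equivalence itself, I would argue that the syzygy $\Omega_n$, a priori only a triangulated autoequivalence of the ordinary stable category $\stmod A_n$ (where it is inverse to the suspension), descends to a triangulated autoequivalence of $\mathcal{D}_n$. The crucial ingredient is the computation $\Omega_n A_i\cong A_{n-i}$ in $\stmod A_n$: the short exact sequence
\[
0\longrightarrow t^iA_n\longrightarrow A_n\longrightarrow A_i\longrightarrow 0
\]
together with the identification $t^iA_n\cong A_n/\mathrm{ann}(t^i)=A_{n-i}$ via multiplication by $t^i$ establishes this, and an analogous calculation handles $\Omega_n^{-1}$. Hence $\Omega_n$ and $\Omega_n^{-1}$ both preserve the class of weakly projective $A_n$-modules, which by Proposition \ref{prop:weaklyprojectives} are the direct sums of the $A_i$. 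They therefore pass to quasi-inverse triangulated endofunctors of $\mathcal{D}_n=\strel A_n$, the quasi-inverse natural isomorphisms on $\stmod A_n$ descending automatically to $\mathcal{D}_n$.

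With this in hand, Lemma \ref{lemma:syzygykernel} supplies the two containments $\Omega_n(\mathcal{K}_n)\subseteq\mathcal{D}_1$ and $\Omega_n^{-1}(\mathcal{D}_1)\subseteq\mathcal{K}_n$, and the autoequivalence $\Omega_n$ of $\mathcal{D}_n$ restricts to the desired equivalence $\mathcal{K}_n\simeq\mathcal{D}_1$. I expect the main obstacle to be the descent step: one must carefully justify that $\Omega_n$, defined relative to the abelian exact structure in $\fgmod A_n$, is not merely a well-defined operation on objects modulo weakly projectives but remains a genuine triangulated functor after the further identifications made in passing to $\strel A_n$. The syzygy computation $\Omega_n A_i\cong A_{n-i}$ is the key point that makes this work; once it is available, the remaining assembly is formal.
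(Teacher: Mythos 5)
Your proposal correctly identifies $\thick^\otimes_{\mathcal{D}_n}(k)$ with $\mathcal{D}_1$, and your computation $\Omega_n A_i\cong A_{n-i}$ is correct and does show that $\Omega_n$ (and $\Omega_n^{-1}$) preserve the class of weakly projective $A_n$-modules. However, you have misidentified the key point, and the ``descent step'' you flag as the main obstacle is indeed where the argument breaks down.

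The trouble is the leap from ``$\Omega_n$ preserves weakly projectives'' to ``$\Omega_n$ passes to a \emph{triangulated} endofunctor of $\strel A_n$.'' Preservation of weakly projectives shows that $\Omega_n$ descends to a well-defined \emph{additive} endofunctor of $\strel A_n$ (since $\strel A_n$ is the quotient of $\fgmod A_n$, or of $\stmod A_n$, by the ideal of maps factoring through a weakly projective). But $\strel A_n$ is \emph{not} a Verdier quotient of $\stmod A_n$: the two triangulated structures come from genuinely different exact structures on $\fgmod A_n$, and in particular the suspension of $\strel A_n$ is $\Sigma = \Omega^{-1}_{\rel}$ computed with respect to weakly-projective covers, not $\Omega_n^{-1}$. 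These disagree even in the stable category---for instance the short exact sequence $0\to k\to A_n\to \Omega_n^{-1}k\to 0$ is not $R_n$-split, so $\Omega_n^{-1}k$ and $\Sigma k$ differ in $\strel A_n$. Thus $\Omega_n$ is not ``inverse to the suspension'' on $\mathcal{D}_n$, and the natural transformations on $\stmod A_n$ do not automatically furnish the data of a triangulated functor on $\strel A_n$. To conclude that $\Omega_n$ is exact on $\mathcal{D}_n$, one must check that $\Omega_n$ sends the \emph{distinguished triangles} of $\strel A_n$ to distinguished triangles, i.e.\ that it preserves $R_n$-split short exact sequences in $\fgmod A_n$. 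That is precisely the claim the paper makes (and, combined with preservation of weakly projectives, it makes $\Omega_n$ an exact functor of the Frobenius category $\rel A_n$, hence a triangulated autoequivalence of its stable category). Your calculation $\Omega_n A_i\cong A_{n-i}$ is a useful supporting fact, but it does not by itself supply this exactness; the missing ingredient is the compatibility of the abelian syzygy with the $R_n$-split exact structure. Once that is in place, your use of Lemma~\ref{lemma:syzygykernel} to obtain the two containments and conclude is the same as the paper's.
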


\begin{proof}
  We saw in Lemma \ref{lemma:kernelgenerated} that
  $\thick^\otimes_{\mathcal{D}_n}(W_n)=\mathcal{K}_n$, and we know that
  $\thick^\otimes_{\mathcal{D}_n}(k)=\mathcal{D}_1$.
  Notice that $\Omega_n$ induces a (not necessarily monoidal) exact
  autoequivalence of $\mathcal{D}_n$.
  Indeed, it is straightforward to check that $\Omega_n$ preserves
  $R_n$-split short exact sequences in $\fgmod A_n$.
  The same is then true of its quasi-inverse $\Omega_n^{-1}$.
  The restriction of $\Omega_n$ to $\mathcal{K}_n$ is therefore
  an equivalence onto its essential image.
  By Lemma \ref{lemma:syzygykernel}, the latter is precisely $\mathcal{D}_1$.
\end{proof}

Although the above equivalence is \emph{not} monoidal in general,
the next section will show that $\thick^\otimes_{\mathcal{D}_n}(W_n)$
and $\stmod kG$ are in fact equivalent as \emph{tensor triangulated} categories.

\section{A monoidal equivalence}\label{sec.monoidal}
By the localisation sequences in Section \ref{sec:localisation},
we know that $\thick^\otimes_{\mathcal{D}_n}(W_n)=\mathcal{K}_n$ is equal to
$\mathcal{D}_n/\mathcal{D}_{n-1}$ as tensor ideals in $\mathcal{D}_n$,
so $\thick^\otimes_{\mathcal{D}_n}(W_n)$ is tensor triangulated.
In this section we exhibit a monoidal exact equivalence between
$\thick^\otimes_{\mathcal{D}_n}(W_n)$ and $\mathcal{D}_1$,
namely the restriction of the functor
\[
\xymatrix{
P\colon\mathcal{D}_n \ar@{>}[r] & \mathcal{D}_1
}
\]
induced by multiplication by $t^{n-1}$.

Specifically, if $M$ is an $A_n$-module, then as an abelian group,
$P(M)$ is defined to be the $A_n$-submodule $t^{n-1}M$ of $M$.
Identifying $A_1$ with $A_n/t$, there is an action of $A_1$ on $t^{n-1}M$
given by
\[
  \overline{a}(t^{n-1}m)=t^{n-1}am\qquad\text{for all $a\in A_n$.}
\]
This action is well defined since $t$ annihilates $t^{n-1}M$.
If $\phi\colon M\rightarrow N$ is a homomorphism of $A_n$-modules,
then so is $\phi|_{t^{n-1}M}\colon t^{n-1}M\rightarrow N$.
Moreover, for $m\in M$ we have
\[
\phi(t^{n-1}m)=t^{n-1}\phi(m)\in t^{n-1}N,
\]
hence $\phi|_{t^{n-1}M}$ induces a map $t^{n-1}M\rightarrow t^{n-1}N$.
We therefore set $P(\phi)=\phi|_{t^{n-1}M}$.

Note that multiplication by $t^{n-1}$ preserves $R_n$-split
short exact sequences in $\fgmod A_n$, so $P$ is exact.
We claim that $P$ is monoidal.
To see this, let $M$ and $M'$ be $A_n$-modules and consider the map
\[
\xymatrix{
\phi\colon P(M)\otimes_{R_1} P(M') \ar@{>}[r] & P(M\otimes_{R_n}M')
}
\]
given by $t^{n-1}m\otimes t^{n-1}m'\mapsto t^{n-1}(m\otimes m')$.
One readily checks that $\phi$ is a well defined iso-
morphism of
abelian groups and that the action of $G$ commutes with $\phi$.

Having established that $P$ is an exact tensor functor, we now let
\[
\xymatrix{
\widetilde{P}\colon\thick^\otimes_{\mathcal{D}_n}(W_n) \ar@{>}[r] &
\mathcal{D}_1
}
\]
denote the restriction of $P$ to $\thick^\otimes_{\mathcal{D}_n}(W_n)$.
Our strategy in proving that $\widetilde{P}$ is a monoidal equivalence
will be to show that the functor
\[
\xymatrix{
F=\Omega_n^{-1}\Omega_1^{\vphantom{n}}\colon\mathcal{D}_1 \ar@{>}[r] &
\thick^\otimes_{\mathcal{D}_n}(W_n).
}
\]
is a quasi-inverse.

\begin{lemma}\label{lemma:surjectivefullmonoidal}
The composition $PF\colon\mathcal{D}_1\rightarrow\mathcal{D}_1$ is
naturally isomorphic to the identity functor.
\end{lemma}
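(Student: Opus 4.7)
The plan is to compute $PF(X)$ on a carefully chosen representative of $F(X) = \Omega_n^{-1}\Omega_1 X$ in $\fgmod A_n$.

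For $X \in \mathcal{D}_1 = \stmod kG$, fix a projective cover $p\colon A_1^{\oplus r} \twoheadrightarrow X$ in $\fgmod A_1$, so that $\Omega_1 X = \ker p$. The isomorphism of $A_n$-modules $A_1 \cong t^{n-1}A_n$ given by $\bar a \mapsto t^{n-1}a$ allows us to identify $A_1^{\oplus r}$ with the submodule $t^{n-1}A_n^{\oplus r}$ of $A_n^{\oplus r}$, and in particular to regard $\Omega_1 X$ as a submodule of $A_n^{\oplus r}$. Let $Z$ be the cokernel of this inclusion, giving a short exact sequence in $\fgmod A_n$
\[
0 \to \Omega_1 X \to A_n^{\oplus r} \to Z \to 0.
\]
Since $A_n^{\oplus r}$ is projective in $\fgmod A_n$, the module $Z$ is a cosyzygy of $\Omega_1 X$ and hence represents $\Omega_n^{-1}\Omega_1 X = F(X)$ in $\mathcal{D}_n$.

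Next, I would compute $P(Z) = t^{n-1}Z$ directly from this presentation. The projection $A_n^{\oplus r} \twoheadrightarrow Z$ sends $t^{n-1}A_n^{\oplus r}$ onto $t^{n-1}Z$, and since $\Omega_1 X \subseteq t^{n-1}A_n^{\oplus r}$ by construction, the kernel of the restricted surjection $t^{n-1}A_n^{\oplus r} \twoheadrightarrow t^{n-1}Z$ is exactly $\Omega_1 X$. Therefore
\[
P(Z) \cong t^{n-1}A_n^{\oplus r}/\Omega_1 X \cong A_1^{\oplus r}/\Omega_1 X \cong X,
\]
with the final isomorphism induced by $p$ itself. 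This yields an object-wise identification $PF(X) \cong X$ realised through a concrete map, which moreover recovers the trivial action of $t$ on $X$.

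The remaining work is to promote this into a natural isomorphism. Given a morphism $f\colon X \to X'$ in $\mathcal{D}_1$, lift a representative of $f$ through the chosen projective covers to obtain $\tilde f\colon A_1^{\oplus r} \to A_1^{\oplus r'}$, and then extend $\tilde f$ to an $A_n$-linear map $A_n^{\oplus r} \to A_n^{\oplus r'}$. Any such extension carries $\Omega_1 X$ into $\Omega_1 X'$ and hence descends to a morphism $Z \to Z'$ representing $F(f)$; applying $P$ restricts this morphism back to $t^{n-1}A_n^{\oplus r} \to t^{n-1}A_n^{\oplus r'}$, which under the identifications above is $\tilde f$ and thus recovers $f$ on the quotients. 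The main obstacle is verifying that this assignment is well defined at the level of the stable categories: different representatives of $f$, different lifts $\tilde f$, and different $A_n$-linear extensions all differ by maps factoring through (weakly) projective modules, and one must check that the resulting discrepancies in $PF(f)$ vanish in $\mathcal{D}_1$. Once that routine bookkeeping is dispatched, the pointwise isomorphisms $PF(X) \cong X$ assemble into the desired natural isomorphism $PF \cong \id$.
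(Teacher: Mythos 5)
Your proposal is correct and takes essentially the same route as the paper. The paper exhibits a commutative diagram of short exact sequences with the right two vertical maps given by multiplication by $t^{n-1}$ (equivalently the inclusion $A_1^{\oplus r}\cong t^{n-1}A_n^{\oplus r}\hookrightarrow A_n^{\oplus r}$), and reads off $X\cong PFX$ from the rightmost column; you instead compute $t^{n-1}Z$ explicitly as $t^{n-1}A_n^{\oplus r}/\Omega_1 X\cong A_1^{\oplus r}/\Omega_1 X\cong X$, which is the same identification unwound. Your remarks on naturality (choosing a lift of a chain map representing $f$ and checking independence of choices up to weakly projective factors) are a reasonable elaboration of what the paper leaves implicit, and the argument goes through since any two lifts of the matrix of $\tilde f$ to $A_n$ restrict to the same map on $t^{n-1}A_n^{\oplus r}$.
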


\begin{proof}
Let $X$ be an object in $\mathcal{D}_1$ and consider the short exact
sequence of $A_1$-modules
\[
\xymatrix{
0 \ar@{>}[r] & \Omega_1 X \ar@{>}[r] & A_1^{\oplus r} \ar@{>}[r] &
X \ar@{>}[r] & 0
}
\]
defining a syzygy $\Omega_1 X$ in $\fgmod A_1$.
A cosyzygy of $\Omega_1 X$ in $\fgmod A_n$ is then obtained via the
short exact sequence
\[
\xymatrix{
0 \ar@{>}[r] & \Omega_1 X \ar@{>}[r] & A_n^{\oplus r} \ar@{>}[r] &
\Omega_n^{-1}\Omega_1^{\vphantom{n}} X \ar@{>}[r] & 0
}
\]
and we have a diagram
\[
\xymatrix{
0 \ar@{>}[r] & \Omega_1 X \ar@{>}[r] \ar@{=}[d] &
A_1^{\oplus r} \ar@{>}[r] \ar@{>}[d] &
X \ar@{>}[r] \ar@{>}[d] & 0 \\
0 \ar@{>}[r] & \Omega_1 X \ar@{>}[r] & A_n^{\oplus r} \ar@{>}[r] &
FX \ar@{>}[r] & 0,
}
\]
where the right two vertical arrows are those induced by multiplication
by $t^{n-1}$.
The right hand arrow therefore identifies $X$ with the submodule $PFX$.
\end{proof}

\begin{corollary}\label{cor:surjectivefullmonoidal}
  The functor $\widetilde{P}$ is full, essentially surjective and monoidal.
\end{corollary}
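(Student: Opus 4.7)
The plan is to read off each of the three claimed properties from the work already carried out in this section, with essentially no new ingredient required. Monoidality of $\widetilde{P}$ is immediate: the discussion preceding the corollary furnished a natural isomorphism $P(M)\otimes_{R_1}P(M')\cong P(M\otimes_{R_n}M')$ and sends $\mathbbold{1}_n=R_n$ to $\mathbbold{1}_1=k$ (via $t^{n-1}R_n\cong k$). Both properties are preserved by restriction to any tensor-closed subcategory, so $\widetilde{P}$ is monoidal.

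Essential surjectivity is a direct consequence of Lemma \ref{lemma:surjectivefullmonoidal}. Given $X\in\mathcal{D}_1$, set $Y=FX=\Omega_n^{-1}\Omega_1 X$. Since $\Omega_1$ is an autoequivalence of $\mathcal{D}_1$, one has $\Omega_1 X\in\mathcal{D}_1$, and then the converse half of Lemma \ref{lemma:syzygykernel} (i.e., $\Omega_n^{-1}\mathcal{D}_1\subseteq\mathcal{K}_n$) places $Y$ in $\mathcal{K}_n=\thick^\otimes_{\mathcal{D}_n}(W_n)$. Lemma \ref{lemma:surjectivefullmonoidal} then yields $\widetilde{P}Y=PFX\cong X$.

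Fullness is the only step with any genuine content. The key observation is that $F$ is essentially surjective onto $\mathcal{K}_n$: by the forward direction of Lemma \ref{lemma:syzygykernel}, every $X\in\mathcal{K}_n$ satisfies $\Omega_n X\in\mathcal{D}_1$, hence $X\cong F(\Omega_1^{-1}\Omega_n X)$ in $\mathcal{D}_n$. So for $X,Y\in\mathcal{K}_n$ one fixes isomorphisms $X\cong FX'$ and $Y\cong FY'$ in $\mathcal{D}_n$ with $X',Y'\in\mathcal{D}_1$. Given $f\colon\widetilde{P}X\to\widetilde{P}Y$ in $\mathcal{D}_1$, the natural isomorphism $PF\cong\id_{\mathcal{D}_1}$ from Lemma \ref{lemma:surjectivefullmonoidal} transports $f$ along the induced identifications $\widetilde{P}X\cong PFX'\cong X'$ and $\widetilde{P}Y\cong Y'$ to a morphism $h\colon X'\to Y'$ in $\mathcal{D}_1$. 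The candidate lift is $g=Fh$ composed with the chosen isomorphisms, viewed as a morphism $X\to Y$ in $\mathcal{K}_n$; naturality of $PF\cong\id$ then gives $\widetilde{P}g = PFh = f$ under the same identifications. The principal obstacle is purely bookkeeping, namely the coherent tracking of these natural isomorphisms, and no further module-theoretic input beyond Lemmas \ref{lemma:syzygykernel} and \ref{lemma:surjectivefullmonoidal} is needed.
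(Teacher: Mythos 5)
Your arguments for essential surjectivity and fullness are sound and match the paper's intent: both reduce, via Lemma \ref{lemma:syzygykernel} (so that $F=\Omega_n^{-1}\Omega_1$ really lands in $\mathcal{K}_n$), to the natural isomorphism $PF\cong\id_{\mathcal{D}_1}$ of Lemma \ref{lemma:surjectivefullmonoidal}. Your fullness argument is in fact more explicit than the paper's one-line appeal, and the bookkeeping you sketch does close correctly by the naturality squares for $PF\cong\id$.

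The monoidality argument, however, has a real gap. You argue that $P\colon\mathcal{D}_n\to\mathcal{D}_1$ preserves tensor products and sends $\mathbbold{1}_n$ to $\mathbbold{1}_1$, and that ``both properties are preserved by restriction to any tensor-closed subcategory.'' But the source of $\widetilde{P}$ is the \emph{proper} tensor ideal $\thick^\otimes_{\mathcal{D}_n}(W_n)=\mathcal{K}_n$, which does not contain $\mathbbold{1}_n$; its tensor unit is the image of $\mathbbold{1}_n$ under the localisation $\mathcal{D}_n\to\mathcal{D}_n/\mathcal{D}_{n-1}$, identified with $\psi^!(\mathbbold{1}_n)\cong\Sigma^{-1}W_n$, not with $R_n$. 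So the fact that $P(R_n)\cong k$ says nothing about whether $\widetilde{P}$ carries the unit of $\mathcal{K}_n$ to $k$, and unit preservation does not ``restrict'' in the naive sense. This is exactly why the paper first observes only that the restriction \emph{respects tensor products} and then invokes essential surjectivity: an essentially surjective functor $\widetilde{P}$ with a binatural isomorphism $\widetilde{P}(-)\otimes\widetilde{P}(-)\cong\widetilde{P}(-\otimes-)$ automatically takes the unit of its source to an object $u$ with $u\otimes y\cong y$ for all $y\in\mathcal{D}_1$, which forces $u\cong\mathbbold{1}_1$. You should replace your ``preserved by restriction'' step with this argument (which you have essentially already proved, since you establish essential surjectivity first).
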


\begin{proof}
  The fact that $\widetilde{P}$ is full and essentially surjective
  follows from Lemmas \ref{lemma:surjectivefullmonoidal} and
  \ref{lemma:syzygykernel}, the latter of which implies that the
  essential image of $F$ is contained in $\thick^\otimes_{\mathcal{D}_n}(W_n)$.
  
  By the discussion preceding Lemma \ref{lemma:surjectivefullmonoidal}
  we know that $P\colon\mathcal{D}_n\rightarrow\mathcal{D}_1$ is monoidal,
  so its restriction
  $\widetilde{P}\colon\thick^\otimes_{\mathcal{D}_n}(W_n)\rightarrow\mathcal{D}_1$
  respects tensor products.
  Any such functor that is also essentially surjective will automatically
  be monoidal.
\end{proof}

\begin{lemma}\label{lemma:zerokernel}
The kernel of $\widetilde{P}$ is trivial.
\end{lemma}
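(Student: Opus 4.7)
The plan is to derive, for every $X\in\mathcal{K}_n$, an isomorphism $\Omega_nX\cong\Omega_1\widetilde{P}(X)$ in $\stmod kG$. Granted this, the lemma follows at once: if $\widetilde{P}(X)=0$ in $\mathcal{D}_1$, then $\Omega_1\widetilde{P}(X)=0$, so $\Omega_nX\cong 0$ in $\mathcal{D}_1\subseteq\mathcal{D}_n$, and applying the autoequivalence $\Omega_n^{-1}$ of $\mathcal{D}_n$ (as used in the proof of Theorem \ref{thm:thicksameastensor}) yields $X\cong\Omega_n^{-1}\Omega_nX\cong 0$ in $\mathcal{D}_n$.

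To produce the syzygy identification, I would re-run the construction from the proof of Lemma \ref{lemma:syzygykernel}. For $X\in\mathcal{K}_n$ that proof supplies a surjective $A_n$-module homomorphism $\psi\colon Y\to X$ with $Y=\bigoplus_j A_nf_j$ projective in $\fgmod A_n$ and with $\ker\psi=\Omega_nX$ annihilated by $t$. The key observation is that, since $Y$ is $R_n$-free, its $t$-torsion submodule is precisely $t^{n-1}Y=P(Y)$; in particular $\Omega_nX\subseteq P(Y)$. Restricting $\psi$ to $P(Y)$ then yields an exact sequence of $A_1$-modules
\[
0\to\Omega_nX\to P(Y)\to P(X)\to 0,
\]
in which $P(Y)=\bigoplus_j A_1\bar{f}_j$ is a projective $kG$-module. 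Viewed in $\stmod kG$, this sequence gives the desired identification $\Omega_nX\cong\Omega_1\widetilde{P}(X)$.

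The main point requiring care is verifying that $\Omega_nX$ really sits inside $P(Y)$; this reduces to the elementary fact that the $t$-torsion of any finitely generated $R_n$-free module coincides with the image of multiplication by $t^{n-1}$. Once this is in hand, the rest is routine: the short exact sequence above is manifest, the syzygy identification in $\stmod kG$ is automatic (since $P(Y)$ is projective over $kG$), and the conclusion follows by transporting the isomorphism through $\stmod kG=\mathcal{D}_1\hookrightarrow\mathcal{D}_n$ and applying the autoequivalence $\Omega_n^{-1}$. In particular this argument also shows that $X\cong F\widetilde{P}(X)$ in $\mathcal{D}_n$, which is the natural companion to Lemma \ref{lemma:surjectivefullmonoidal} and completes the picture needed for $\widetilde{P}$ to be an equivalence.
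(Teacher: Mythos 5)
Your argument is correct, and it takes a genuinely different route than the paper's. The paper proves the lemma by analysing the $S$-module structure of $X$ directly: it lifts the idempotents defining the weakly projective module $\widetilde{P}X$ to idempotents in $A_n$, builds the injective module $Y=\bigoplus_j A_nf_j$, extends the embedding $\widetilde{P}X\hookrightarrow Y$ along $\widetilde{P}X\hookrightarrow X$ to a surjection $X\twoheadrightarrow Y$ by a socle argument, splits off $Y$ as a direct summand, and then kills the remaining $R_{n-1}$-torsion piece using the defining property of the kernel $\mathcal{K}_n$. You instead extract a clean syzygy identity $\Omega_nX\cong\Omega_1\widetilde{P}(X)$ in $\stmod kG$ by restricting the surjection $\psi\colon Y\to X$ from the proof of Lemma~\ref{lemma:syzygykernel} to the $t$-torsion $P(Y)=t^{n-1}Y$, noting that $\Omega_nX=\ker\psi$ is already known to be $t$-torsion and that $P(Y)\cong\bigoplus_j A_1\bar f_j$ is $kG$-projective; the lemma then drops out after applying the autoequivalence $\Omega_n^{-1}$ of $\mathcal{D}_n$. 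This buys you a couple of things: you avoid redoing the idempotent-lifting/injective-envelope argument by reusing the construction already in Lemma~\ref{lemma:syzygykernel}, and you obtain at once the object-level isomorphism $X\cong F\widetilde{P}(X)$, which together with Lemma~\ref{lemma:surjectivefullmonoidal} exhibits $F$ and $\widetilde{P}$ as mutually quasi-inverse without needing to invoke the ``full + essentially surjective + trivial kernel'' folklore in the proof of Theorem~\ref{thm:equivalence}. One small point to keep straight is that the construction in Lemma~\ref{lemma:syzygykernel} assumes $X$ has been replaced, via Lemma~\ref{lemma:abeliangroupdecomp}, by an isomorphic representative with $\iota_*X\cong R_n^{\oplus r}\oplus R_{n-1}^{\oplus s}$; since both $P$ and $\Omega_n$ are well-defined functors on the stable category, passing to this representative is harmless, but it is worth saying explicitly.
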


\begin{proof}
Let $X$ be an object in $\thick^\otimes_{\mathcal{D}_n}(W_n)$ with
$\widetilde{P}X$ weakly projective.
By Lemma \ref{lemma:abeliangroupdecomp}, we may assume that
$\iota_*X\cong R_n^{\oplus r}\oplus R_{n-1}^{\oplus s}$
for some non-negative integers $r$ and $s$.
Because $\widetilde{P}X$ is weakly projective, we have
$\widetilde{P}X\cong\bigoplus_j A_1e_j$ for some idempotents $e_j$ in $A_1$.

The surjection $A_n\rightarrow A_1$ given by multiplication by
$t^{n-1}$ has kernel $tA_n$, thus it is iso-
morphic to the base change
homomorphism $A_n\rightarrow A_n\otimes_{R_n}k$.
As in the proof of Lemma \ref{lemma:syzygykernel},
there then exist idempotents $f_j$ in $A_n$ satisfying $e_j=t^{n-1}f_j$.
Letting $Y=\bigoplus_j A_nf_j$, we obtain a natural embedding
$\phi\colon \widetilde{P}X\hookrightarrow Y$ mapping $\widetilde{P}X$ isomorphically onto $t^{n-1}Y$.

Note that since $A_n$ is injective as a module over itself and $Y$ is
a direct summand of a free $A_n$-module, $Y$ is also injective.
(Actually, $Y$ is the injective hull of $\widetilde{P}X$.)
This, along with the embedding $\widetilde{P}X\hookrightarrow X$,
allows us to extend $\phi$ to a morphism $\psi\colon X\rightarrow Y$.

Now let $\widetilde{\psi}$ denote the map of free $R_n$-modules
obtained by restricting $\psi$ to the $R_n$-free component
$R_n^{\oplus r}$ of $X$.
Then $t^{n-1}\widetilde{\psi}=\phi$, hence $\widetilde{\psi}$ is an
isomorphism on socles.
This shows that $Y$ has rank $r$ as a free $R_n$-module and that $\psi$
is surjective.
Because $Y$ is projective, we may therefore split off a direct summand
$Y$ from $X$ and assume that $\iota_*X\cong R_{n-1}^{\oplus s}$.
We then have $X\otimes_{R_n}R_{n-1}\cong X$.
But $X$ lies in $\thick^\otimes_{\mathcal{D}_n}(W_n)$, the kernel of
$-\otimes_{R_n}R_{n-1}$, hence $X\cong 0$ in $\mathcal{D}_n$.
\end{proof}

\begin{theorem}\label{thm:equivalence}
  The functor $\widetilde{P}$ is a monoidal equivalence of triangulated
  categories.
\end{theorem}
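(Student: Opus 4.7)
The plan is to deduce the theorem by combining the properties of $\widetilde{P}$ already assembled in Corollary \ref{cor:surjectivefullmonoidal} and Lemma \ref{lemma:zerokernel} with a faithfulness argument standard in triangulated category theory. Since $P$ is multiplication by $t^{n-1}$ and this operation preserves $R_n$-split short exact sequences, $\widetilde{P}$ is exact; Corollary \ref{cor:surjectivefullmonoidal} then tells us it is full, essentially surjective, and monoidal, while Lemma \ref{lemma:zerokernel} says it reflects zero objects. The only piece missing is that $\widetilde{P}$ is faithful.

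As a preliminary step I would observe that $\widetilde{P}$ reflects isomorphisms: given $u$ with $\widetilde{P}u$ an isomorphism, complete $u$ to a distinguished triangle and apply the exact functor $\widetilde{P}$; the cone then vanishes under $\widetilde{P}$, so Lemma \ref{lemma:zerokernel} forces it to be zero already in $\mathcal{D}_n$, making $u$ an isomorphism.

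The main step is faithfulness. Suppose $f\colon X\to Y$ lies in $\thick^\otimes_{\mathcal{D}_n}(W_n)$ with $\widetilde{P}f=0$. Completing $f$ to a distinguished triangle $X\xrightarrow{f} Y\xrightarrow{g} Z\xrightarrow{h}\Sigma X$ and applying $\widetilde{P}$ yields a split triangle in $\mathcal{D}_1$, so in particular there is a retraction $r\colon\widetilde{P}Z\to\widetilde{P}Y$ of $\widetilde{P}g$. Using fullness of $\widetilde{P}$, lift $r$ to some $\tilde r\colon Z\to Y$; then $\widetilde{P}(\tilde r\circ g)=\id$, so by the previous paragraph $\tilde r\circ g$ is an isomorphism of $Y$, and $\alpha:=(\tilde r\circ g)^{-1}\circ\tilde r$ is a genuine retraction of $g$ in $\mathcal{D}_n$. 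Since $g\circ f=0$ in any distinguished triangle, we conclude $f=\alpha\circ g\circ f=0$. Combined with the earlier data this makes $\widetilde{P}$ a monoidal equivalence of triangulated categories. The main obstacle is precisely this faithfulness step: the naive lift of a splitting via fullness need not split the triangle in the source, and the reflection-of-isomorphisms observation is what promotes the lift $\tilde r$ into a true retraction $\alpha$.
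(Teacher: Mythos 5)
Your proof is correct. The paper itself handles the step from ``full, essentially surjective, exact, trivial kernel'' to ``equivalence'' by citing a piece of folklore (Balmer, \emph{Supports and filtrations in algebraic geometry and modular representation theory}, Proposition 3.18), whereas you unpack that folklore and give the standard faithfulness argument explicitly: reflection of isomorphisms via the kernel being trivial, then lifting a retraction of $\widetilde{P}g$ through fullness and correcting it by the reflected inverse to get an honest retraction $\alpha$ of $g$ in $\mathcal{D}_n$, from which $f = \alpha\circ g\circ f = 0$. The two small technical points you flag are exactly the ones that matter --- that $\widetilde{P}$ must be known exact (established in the discussion before Lemma \ref{lemma:surjectivefullmonoidal}, since multiplication by $t^{n-1}$ preserves $R_n$-split sequences), and that a fullness lift of a splitting need not itself split the triangle until one invokes reflection of isomorphisms. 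What the paper's citation buys is brevity; what your version buys is self-containment and a clear exhibit of why trivial kernel plus full is enough in the triangulated setting. Both are fine proofs of the same statement.
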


\begin{proof}
  We know that $\widetilde{P}$ is full and essentially surjective
  by Corollary \ref{cor:surjectivefullmonoidal}, and it has trivial kernel
  by Lemma \ref{lemma:zerokernel}.
  Appealing to a bit of folklore (see \cite[Proposition 3.18]{Balmer:2007a}),
  these con-
  ditions are sufficient for $\widetilde{P}$ to be an equivalence.
  It is monoidal by Corollary \ref{cor:surjectivefullmonoidal}.
\end{proof}

We are now in a position to prove the main result.

\begin{theorem*}[\ref{thm:maintheorem}]
  For every positive integer $n$ there is a homeomorphism
\[
  \Spc\mathcal{D}_n\cong\coprod_{i=1}^n\Spc\mathcal{D}_1.
\]
\end{theorem*}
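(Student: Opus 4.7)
The plan is to combine the spectral decomposition of Theorem~\ref{thm:splitting} with the monoidal equivalence of Theorem~\ref{thm:equivalence}, applied not only to $\mathcal{D}_n$ itself but to each of the intermediate categories $\mathcal{D}_i$ for $1\le i\le n$. First I would invoke Theorem~\ref{thm:splitting} to write
\[
\Spc\mathcal{D}_n=\coprod_{i=1}^n\Spc\mathcal{K}_i,
\]
which reduces the problem to producing a homeomorphism $\Spc\mathcal{K}_i\cong\Spc\mathcal{D}_1$ for every $i$.

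Next I would observe that each $\mathcal{K}_i$ is of exactly the same shape as $\mathcal{K}_n$, just with $n$ replaced by $i$: it is the kernel of $-\otimes_{R_i}R_{i-1}\colon\mathcal{D}_i\to\mathcal{D}_{i-1}$. Since the results of Sections~\ref{sec.thick} and~\ref{sec.monoidal} were derived purely from the ring-theoretic setup of the pair $(R_i,R_{i-1})$ and the group $G$, they apply verbatim with $n$ replaced by $i$. Thus Lemma~\ref{lemma:kernelgenerated} gives $\mathcal{K}_i=\thick^\otimes_{\mathcal{D}_i}(W_i)$, and Theorem~\ref{thm:equivalence} yields a monoidal exact equivalence
\[
\widetilde{P}_i\colon\mathcal{K}_i=\thick^\otimes_{\mathcal{D}_i}(W_i)\xrightarrow{\ \sim\ }\mathcal{D}_1.
\]
For $i=1$ this is trivially the identity, matching the convention $\mathcal{K}_1=\mathcal{D}_1$ from Theorem~\ref{thm:splitting}.

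Finally, I would appeal to the functoriality of Balmer's spectrum: any equivalence of essentially small tensor triangulated categories induces a homeomorphism on prime ideal spectra (\cite{Balmer:2005a}). Applying this to each $\widetilde{P}_i$ gives $\Spc\mathcal{K}_i\cong\Spc\mathcal{D}_1$ for every $i$, and assembling these homeomorphisms on the disjoint pieces of the decomposition above produces
\[
\Spc\mathcal{D}_n\cong\coprod_{i=1}^n\Spc\mathcal{D}_1,
\]
as required.

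There is essentially no obstacle left at this stage; all the serious work has been carried out in the previous sections. The only point requiring a line of care is checking that the proofs of Sections~\ref{sec.thick} and~\ref{sec.monoidal} were genuinely stated in enough generality to cover arbitrary $i$ in place of $n$, which is immediate since nothing in those arguments used that $n$ was the top index---only the formal properties of the pair $(R_i,R_{i-1})$ together with the fact that $R_i$ is a quotient of a discrete valuation ring by a power of its uniformiser.
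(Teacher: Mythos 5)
Your proposal is correct and follows essentially the same route as the paper's own proof: decompose via Theorem~\ref{thm:splitting}, identify each $\mathcal{K}_i$ with $\thick^\otimes_{\mathcal{D}_i}(W_i)$ via Lemma~\ref{lemma:kernelgenerated} applied with $i$ in place of $n$, and then use the monoidal equivalence of Theorem~\ref{thm:equivalence} (again for general $i$) together with functoriality of $\Spc$ to get $\Spc\mathcal{K}_i\cong\Spc\mathcal{D}_1$. The only difference is that you make explicit the (harmless) reindexing step and the appeal to functoriality of the spectrum, which the paper leaves implicit.
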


\begin{proof}
  Setting $\mathcal{K}_1=\mathcal{D}_1$, Theorem \ref{thm:splitting}
  tells us that there is a decomposition of spectra
\[
\Spc\mathcal{D}_n=\coprod_{i=1}^n\Spc\mathcal{K}_i.
\]
By Lemma \ref{lemma:kernelgenerated} we have
$\mathcal{K}_i=\thick_{\mathcal{D}_i}^\otimes(W_i)$.
Theorem \ref{thm:equivalence} shows that the functor
\[
\xymatrix{
\widetilde{P}\colon\thick^\otimes_{\mathcal{D}_i}(W_i) \ar@{>}[r] &
\mathcal{D}_1
}
\]
induced by multiplication by $t^{i-1}$ is an equivalence of tensor
triangulated categories. Put-
ting this all together, we have
$\Spc\mathcal{K}_i\cong\Spc\mathcal{D}_1$ for all $1\leq i\leq n$.
\end{proof}

The following corollary summarises the consequences of our results for
$\mathcal{D}_n$.

\begin{corollary}\label{cor:decomposition}
  The relative stable module category $\mathcal{D}_n=\strel R_nG$
  admits a semi-orthogo-
  nal decomposition into $n$ tensor ideals
\[
  \strel R_nG = (\stmod kG, \ldots, \stmod kG),
\]
where the $i$th copy embeds as
$\mathcal{K}_i=\thick^\otimes_{\mathcal{D}_n}(W_i)$.
\end{corollary}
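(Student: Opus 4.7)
The plan is to bolt together the iterated localisation sequences of Section \ref{sec:localisation} with the monoidal equivalence of Theorem \ref{thm:equivalence}; at this stage all the ingredients are in place and the work left is essentially assembly.

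First, I would establish the semi-orthogonal decomposition $\mathcal{D}_n = (\mathcal{K}_1,\ldots,\mathcal{K}_n)$ by induction on $n$. The base case $n=1$ is trivial since we have set $\mathcal{K}_1 = \mathcal{D}_1$. For the inductive step, the localisation sequence
\[
\mathcal{K}_n \hookrightarrow \mathcal{D}_n \twoheadrightarrow \mathcal{D}_{n-1}
\]
produced in Section \ref{sec:localisation} exhibits $\mathcal{D}_n$ as the two-term semi-orthogonal decomposition $(\mathcal{D}_{n-1},\mathcal{K}_n)$, with $\mathcal{D}_{n-1}$ viewed inside $\mathcal{D}_n$ via the fully faithful embedding from the discussion at the start of that section, and both factors are tensor ideals as recorded after \eqref{eqn:kernel}. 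Splicing the inductive decomposition of $\mathcal{D}_{n-1}$ into the first slot yields the $n$-term decomposition.

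Second, I would identify $\mathcal{K}_i$ with $\thick^\otimes_{\mathcal{D}_n}(W_i)$. Lemma \ref{lemma:kernelgenerated}, applied internally to $\mathcal{D}_i$, gives $\mathcal{K}_i = \thick^\otimes_{\mathcal{D}_i}(W_i)$. Because the embedding $\mathcal{D}_i \hookrightarrow \mathcal{D}_n$ is fully faithful and strong monoidal and sends the generator $W_i$ to $W_i$, the thick tensor ideal it generates is unchanged by the ambient category, so the image of $\mathcal{K}_i$ in $\mathcal{D}_n$ is precisely $\thick^\otimes_{\mathcal{D}_n}(W_i)$. Finally, Theorem \ref{thm:equivalence} produces a monoidal triangulated equivalence $\thick^\otimes_{\mathcal{D}_i}(W_i) \simeq \mathcal{D}_1 = \stmod kG$, which replaces each factor of the decomposition by a copy of $\stmod kG$.

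The only step that needs any genuine care is the inductive assembly: one has to check that the $\Hom$-vanishing conditions organising $\mathcal{D}_{n-1} = (\mathcal{K}_1,\ldots,\mathcal{K}_{n-1})$ are preserved when $\mathcal{D}_{n-1}$ is re-viewed as a subcategory of $\mathcal{D}_n$. This is automatic from full faithfulness of the embedding, since no $\Hom$-groups change, but it is the one place one should not wave hands.
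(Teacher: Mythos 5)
Your proposal is correct and follows essentially the same route as the paper's proof, which cites precisely the same three ingredients (the localisation sequences of Section~\ref{sec:localisation}, Lemma~\ref{lemma:kernelgenerated}, and Theorem~\ref{thm:equivalence}) and leaves the assembly to the reader.

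One small sharpening is worth making in your second step. The identification $\thick^\otimes_{\mathcal{D}_i}(W_i)=\thick^\otimes_{\mathcal{D}_n}(W_i)$ does not follow merely from the embedding $\mathcal{D}_i\hookrightarrow\mathcal{D}_n$ being fully faithful and strong monoidal: a fully faithful strong monoidal embedding can easily have the property that the ambient tensor ideal generated by an object is strictly larger, since the ambient category has more objects to tensor against. What makes the identification go through here is that $\mathcal{D}_i$ is itself a thick \emph{tensor ideal} in $\mathcal{D}_n$ (equivalently, for any $X\in\mathcal{D}_n$ one has $W_i\otimes_{R_n}X\cong W_i\otimes_{R_i}(X\otimes_{R_n}R_i)$, so tensoring $W_i$ against $\mathcal{D}_n$ factors through base change to $\mathcal{D}_i$). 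This shows $\thick^\otimes_{\mathcal{D}_i}(W_i)$ is already a tensor ideal of $\mathcal{D}_n$, and hence coincides with $\thick^\otimes_{\mathcal{D}_n}(W_i)$. With that observation supplied, your argument is complete and matches the paper.
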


\begin{proof}
This follows from the discussion in Section \ref{sec:localisation},
along with Theorem \ref{thm:equivalence} and Lemma \ref{lemma:kernelgenerated}.
\end{proof}

\section{An example: cyclic groups of prime order}\label{sec.cyclic}
In this section we provide an explicit description of the spectrum
of $\strel R_nG$ in the case where the residue field $k$ has
prime characteristic $p$ and $G = C_p$, the cyclic group of order $p$.
In particular, we give concrete generators for all of the prime tensor ideals.

The first several results actually hold for any finite group $G$.
We remind the reader that $W_i$ denotes the cosyzygy $\Omega_i^{-1}k$,
taken with respect to the usual abelian category structure in
$\fgmod A_i=\fgmod R_iG$.

\begin{proposition}\label{prop:omegasgenerate}
For any finite group $G$, the $A_n$-modules $W_i$
for $i\leq n$ generate $\mathcal{D}_n=\strel A_n$ as a thick tensor ideal.
Any prime tensor ideal in $\Spc\mathcal{D}_n$ contains at least $n-1$
objects in the set $\{W_1,\ldots,W_n\}$.
\end{proposition}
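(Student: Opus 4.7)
The two assertions can be proved essentially independently. For the generation statement, the natural approach is induction on $n$, using the localisation sequence $\mathcal{K}_n\to\mathcal{D}_n\to\mathcal{D}_{n-1}$ recalled in Section \ref{sec:localisation}. The base case $n=1$ is immediate: $W_1=\Omega_1^{-1}k=\Sigma k$ is a shift of the tensor unit $\mathbbold{1}_1=k$, so it already generates all of $\mathcal{D}_1$ as a thick tensor ideal. For the inductive step, every $X\in\mathcal{D}_n$ sits in a distinguished triangle $K\to X\to L\to$ coming from the localisation, with $K\in\mathcal{K}_n$ and $L\in\mathcal{D}_{n-1}$. Lemma \ref{lemma:kernelgenerated} places $K$ into $\thick^\otimes_{\mathcal{D}_n}(W_n)$, and the inductive hypothesis, applied inside $\mathcal{D}_{n-1}$, produces $L$ from $\{W_1,\ldots,W_{n-1}\}$ by finitely many thick and tensor operations. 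Since each such operation is still available in $\mathcal{D}_n$, the same recipe realises $L$ in $\thick^\otimes_{\mathcal{D}_n}(\{W_1,\ldots,W_{n-1}\})$, and closure under extensions completes the step.

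For the second assertion, I would invoke Lemma \ref{lemma:cosyzygytensors}: for $1\leq i<j\leq n$ one has $W_i\otimes W_j\cong A_{i-1}\oplus A_i^{\oplus(|G|-1)}$. Each summand $A_\ell$ is projective over itself and is therefore weakly projective in $\fgmod A_n$ by Proposition \ref{prop:weaklyprojectives}, hence zero in $\mathcal{D}_n$. Consequently $W_i\otimes W_j=0$ in $\mathcal{D}_n$ whenever $i\neq j$. Given any prime $\mathcal{P}\in\Spc\mathcal{D}_n$, the relation $W_i\otimes W_j=0\in\mathcal{P}$ combined with primality forces $W_i\in\mathcal{P}$ or $W_j\in\mathcal{P}$. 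Hence the set $\{i\leq n:W_i\notin\mathcal{P}\}$ has cardinality at most one, so $\mathcal{P}$ contains at least $n-1$ of the $W_i$'s.

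The essential content for the second part is the orthogonality calculation of Lemma \ref{lemma:cosyzygytensors}; once this and Proposition \ref{prop:weaklyprojectives} are in hand, the rest is a formal application of the definition of primality. For the first part, no single step is genuinely difficult, but one should verify that an inductive hypothesis about thick tensor ideals inside $\mathcal{D}_{n-1}$ transfers to the corresponding statement inside $\mathcal{D}_n$; this is routine because every thick-and-tensor operation available in the subcategory is also available in the ambient category.
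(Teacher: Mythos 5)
Your proof is correct and follows essentially the same route as the paper. For the second assertion your argument is identical to the paper's: invoke Lemma \ref{lemma:cosyzygytensors} to get $W_i\otimes W_j = 0$ in $\mathcal{D}_n$ for $i\neq j$ (you helpfully spell out that the summands $A_\ell$ are weakly projective, hence vanish), then apply primality. For the first assertion the paper simply cites Corollary \ref{cor:decomposition}; you instead run the underlying induction on $n$ via the localisation sequences and Lemma \ref{lemma:kernelgenerated}, which is precisely the content that corollary packages, so this is a legitimate unpacking rather than a different argument. The one point worth being careful about in your inductive step --- that a thick-and-tensor recipe inside $\mathcal{D}_{n-1}$ transfers to $\mathcal{D}_n$ --- holds because $\mathcal{D}_{n-1}$ sits in $\mathcal{D}_n$ as a tensor ideal and, for modules annihilated by $t^{n-1}$, the tensors $\otimes_{R_{n-1}}$ and $\otimes_{R_n}$ agree; you flag this as routine, which is fair.
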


\begin{proof}
The first statement follows directly from Corollary \ref{cor:decomposition}.
For the second statement, let $\mathcal{P}\in\Spc\mathcal{D}_n$ and
suppose that $W_i\notin\mathcal{P}$.
By Lemma \ref{lemma:cosyzygytensors} we have
\[
W_i\otimes W_j=0\in\mathcal{P}.
\]
Since $\mathcal{P}$ is prime, this shows that $W_j\in\mathcal{P}$ for all
$j\neq i$.
\end{proof}

Motivated by the previous lemma, we now focus our attention on certain
thick tensor ideals of $\mathcal{D}_n$.
For $1\leq i\leq n$, we let
\[
\mathcal{P}_{i,n}=
\thick_{\mathcal{D}_n}^\otimes(\{W_1,\ldots,W_n\}\setminus\{W_i\}).
\]
Our goal will be to show that these are precisely the prime tensor ideals in
$\mathcal{D}_n$ in the case where $G$ is the cyclic group $C_p$.
(We know from Theorem \ref{thm:maintheorem} that the spectrum will be a disjoint
union of $n$ points.)
The following two results hold for any finite group $G$.

\begin{lemma}\label{lemma:imageofbasechange}
For all $X\in\mathcal{P}_{i,n}$ we have
$X\otimes\mathbbold{1}_{n-1}\in\mathcal{P}_{i,n-1}$, i.e., 
\[
{\phi_n}^*\mathcal{P}_{i,n}\subseteq\mathcal{P}_{i,n-1}.
\]
\end{lemma}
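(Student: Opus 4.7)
The plan is to exploit the exactness and strong monoidality of $\phi_n^* = -\otimes\mathbbold{1}_{n-1}$. Such a functor automatically sends $\thick^\otimes_{\mathcal{D}_n}(\mathcal{S})$ into $\thick^\otimes_{\mathcal{D}_{n-1}}(\phi_n^*\mathcal{S})$ for any set $\mathcal{S}$, because all of the closure operations (shifts, cones, direct summands, and tensor products with arbitrary objects) are preserved, with the last one using that $\mathcal{P}_{i,n-1}$ is itself a tensor ideal of $\mathcal{D}_{n-1}$. Taking $\mathcal{S}=\{W_1,\ldots,W_n\}\setminus\{W_i\}$, the problem therefore reduces to showing that $\phi_n^*W_j \in \mathcal{P}_{i,n-1}$ for each such $j$, under the tacit hypothesis $1\leq i\leq n-1$ so that the right hand side is defined.

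Two cases arise. When $j=n$, Lemma \ref{lemma:omegabasechange} yields $\phi_n^*W_n \cong A_{n-1}$, which is weakly projective and hence zero in $\mathcal{D}_{n-1}$, so it belongs to every tensor ideal. When $j\leq n-1$, the module $W_j=\Omega_j^{-1}k$ lives in $\fgmod A_j$, so it is annihilated by $t^j$ and is already an $R_{n-1}$-module; the canonical unit map then produces an $A_{n-1}$-module isomorphism $\phi_n^*W_j = W_j\otimes_{R_n}R_{n-1}\cong W_j$. Since $j\neq i$ and $j\leq n-1$, this $W_j$ is one of the defining generators of $\mathcal{P}_{i,n-1}$, and the inclusion holds.

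I do not foresee any serious obstacle. The entire proof is a bookkeeping check of how the finitely many generators of $\mathcal{P}_{i,n}$ behave under base change, combined with the formal fact that a strong monoidal exact functor maps thick tensor ideal closures into thick tensor ideal closures of the image. The only mildly delicate point is ensuring that the identification $W_j\otimes_{R_n}R_{n-1}\cong W_j$ is one of $A_{n-1}$-modules and not merely of abelian groups, but this is immediate from the compatibility of the diagonal $G$-action with tensor products over the coefficient ring.
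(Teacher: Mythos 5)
Your proof is correct and follows the same route as the paper's: check that the base change functor $\phi_n^*$ sends each generator $W_j$ (for $j\neq i$) into $\mathcal{P}_{i,n-1}$ — using Lemma \ref{lemma:omegabasechange} for $j=n$ and the observation that $W_j\otimes_{R_n}R_{n-1}\cong W_j$ for $j\leq n-1$ — and then invoke the formal fact that an exact strong monoidal functor carries the thick tensor ideal generated by a set into the thick tensor ideal generated by its image. The paper states this last step more tersely, citing Stevenson for the dubious reader, and your explicit note that the argument presupposes $i\leq n-1$ is a worthwhile clarification.
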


\begin{proof}
If $i\leq n-1$ then $W_i\otimes\mathbbold{1}_{n-1}=W_i$, whereas
$W_n\otimes\mathbbold{1}_{n-1}=0$ by Lemma \ref{lemma:omegabasechange}.
Hence ${\phi_n}^*$ sends the generators of $\mathcal{P}_{i,n}$ into
$\mathcal{P}_{i,n-1}$.
Because $\mathcal{P}_{i,n-1}$ is thick and ${\phi_n}^*$ is exact,
the lemma follows immediately.
(The dubious reader may consult \cite[Lemma 3.8]{Stevenson:2013a}.)
\end{proof}

\begin{lemma}\label{lemma:idealsareproper}
Each tensor ideal $\mathcal{P}_{i,n}$ is proper in $\mathcal{D}_n$.
\end{lemma}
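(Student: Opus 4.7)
The plan is to proceed by induction on $n$. The base case $n=1$ is immediate, since $\mathcal{P}_{1,1}=\thick^\otimes_{\mathcal{D}_1}(\emptyset)=0$ and $\mathcal{D}_1=\stmod kG$ is nonzero (under the implicit assumption that the characteristic of $k$ divides $|G|$, without which the entire setup collapses). For the inductive step, I would separate the argument according to whether $i\leq n-1$ or $i=n$, since these two ranges require rather different tools.

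For $i\leq n-1$, the approach is a direct appeal to Lemma \ref{lemma:imageofbasechange}. If one had $\mathcal{P}_{i,n}=\mathcal{D}_n$, then in particular $\mathbbold{1}_n\in\mathcal{P}_{i,n}$, and applying ${\phi_n}^*$ would give $\mathbbold{1}_{n-1}={\phi_n}^*\mathbbold{1}_n\in\mathcal{P}_{i,n-1}$. This forces $\mathcal{P}_{i,n-1}=\mathcal{D}_{n-1}$, contradicting the inductive hypothesis.

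The remaining case $i=n$ is the main obstacle, since Lemma \ref{lemma:imageofbasechange} provides no constraint (there is no $\mathcal{P}_{n,n-1}$ to land in). To handle it, I would invoke the exact monoidal functor $P\colon\mathcal{D}_n\to\mathcal{D}_1$ constructed in Section \ref{sec.monoidal}. Because $P$ is monoidal, its kernel $\ker P$ is a thick tensor ideal of $\mathcal{D}_n$. For each $j<n$, the module $W_j=\Omega_j^{-1}k$ is an $A_j$-module, hence annihilated by $t^j$ and a fortiori by $t^{n-1}$; therefore $P(W_j)=t^{n-1}W_j=0$. It follows that $\mathcal{P}_{n,n}=\thick^\otimes_{\mathcal{D}_n}(W_1,\ldots,W_{n-1})\subseteq\ker P$. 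On the other hand, $P(\mathbbold{1}_n)=t^{n-1}R_n$ is the socle of $R_n$, a one-dimensional $k$-vector space with trivial $G$-action, i.e.\ $\mathbbold{1}_1$, which is nonzero in $\mathcal{D}_1$. Hence $\mathbbold{1}_n\notin\ker P\supseteq\mathcal{P}_{n,n}$, so $\mathcal{P}_{n,n}$ is proper.

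The key observation driving the $i=n$ case is that the socle-projection functor $P$ simultaneously annihilates all of the lower-index generators $W_j$ (they are killed by the ambient power $t^{n-1}$) while still detecting the unit. This keeps the proof entirely self-contained, avoiding any appeal to Theorem \ref{thm:maintheorem} or to the support-theoretic decomposition of $\Spc\mathcal{D}_n$.
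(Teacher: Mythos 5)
Your proof is correct, and it diverges from the paper's argument in an interesting way at the crux. Both proofs use the same mechanism for $i < n$: apply Lemma \ref{lemma:imageofbasechange} to pull the hypothetical containment $\mathbbold{1}_n \in \mathcal{P}_{i,n}$ down to $\mathbbold{1}_{n-1} \in \mathcal{P}_{i,n-1}$, contradicting the inductive hypothesis. The divergence is in the case $i=n$, which the paper organises as the base case of an induction on $n \geq i$ with $i$ fixed. There the paper notes that each generator $W_j$ with $j < i$ is already an $A_{i-1}$-module, hence lies in the image of ${\phi_i}_*\colon\mathcal{D}_{i-1}\hookrightarrow\mathcal{D}_i$, which is a proper thick tensor ideal by the localisation sequence of Section \ref{sec:localisation}; thus $\mathcal{P}_{i,i}\subseteq{\phi_i}_*\mathcal{D}_{i-1}\subsetneq\mathcal{D}_i$. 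You instead invoke the exact monoidal socle functor $P = t^{n-1}(-)\colon\mathcal{D}_n\to\mathcal{D}_1$ from Section \ref{sec.monoidal}, observing that $t^{n-1}$ already annihilates every $W_j$ with $j<n$ (being $A_j$-modules), so $\mathcal{P}_{n,n}\subseteq\ker P$, whereas $P(\mathbbold{1}_n)=\mathbbold{1}_1\neq 0$. Both arguments amount to exhibiting a nonzero exact monoidal functor out of $\mathcal{D}_i$ that kills every generator of $\mathcal{P}_{i,i}$ — the Verdier quotient onto $\mathcal{K}_i$ in the paper, the functor $P$ landing in $\mathcal{D}_1$ in yours — and indeed these are essentially the same functor up to the equivalence $\widetilde{P}\colon\mathcal{K}_n\xrightarrow{\ \sim\ }\mathcal{D}_1$ of Theorem \ref{thm:equivalence}. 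The trade-off is that your version draws on the Section \ref{sec.monoidal} machinery rather than the Section \ref{sec:localisation} localisation sequence, and in exchange makes the non-vanishing of the unit more explicit rather than having it buried in the assertion that ${\phi_i}_*\mathcal{D}_{i-1}$ is a \emph{proper} ideal. You are also right to flag the standing hypothesis that $\mathrm{char}(k)$ divides $|G|$; the paper uses it equally but less visibly, since it is what makes $\mathcal{K}_i$ nonzero and hence ${\phi_i}_*\mathcal{D}_{i-1}$ proper.
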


\begin{proof}
We fix $i$ and proceed by induction on $n$. For the base $n=i$ we need to
show that
\[
\mathcal{P}_{i,i}=\thick_{\mathcal{D}_i}^\otimes(W_1,\ldots,W_{i-1})
\]
is proper.
We saw in Section \ref{sec:localisation} that the restriction of scalars
${\phi_i}_*$ embeds $\mathcal{D}_{i-1}$ as a proper tensor ideal in
$\mathcal{D}_i$.
For each $1\leq j \leq i-1$ we have $W_j\in{\phi_i}_*\mathcal{D}_{i-1}$,
so $\mathcal{P}_{i,i}$ is contained in ${\phi_i}_*\mathcal{D}_{i-1}$
and $\mathcal{P}_{i,i}$ is proper in $\mathcal{D}_i$.
(In fact, $\mathcal{P}_{i,i}={\phi_i}_*\mathcal{D}_{i-1}$ by Proposition
\ref{prop:omegasgenerate}.)

Now let $n>i$ and assume that $\mathcal{P}_{i,n-1}$ is proper in
$\mathcal{D}_{n-1}$.
For the sake of contradiction, suppose that $\mathcal{P}_{i,n}$ is not
proper in $\mathcal{D}_n$ so that it contains the tensor unit
$\mathbbold{1}_n$.
Then $\mathbbold{1}_{n-1}=\mathbbold{1}_n\otimes\mathbbold{1}_{n-1}$ lies
in $\mathcal{P}_{i,n-1}$ by Lemma \ref{lemma:imageofbasechange},
a contradiction.
\end{proof}

We are only now forced to specialise to the case $G=C_p$.

\begin{lemma}\label{lemma:maximalideal}
Each $\mathcal{P}_{i,n}$ is a maximal tensor ideal in
$\mathcal{D}_n$.
\end{lemma}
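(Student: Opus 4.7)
The plan is to take any thick tensor ideal $\mathcal{I}$ strictly containing $\mathcal{P}_{i,n}$ and deduce $\mathcal{I}=\mathcal{D}_n$; since Lemma \ref{lemma:idealsareproper} already asserts that $\mathcal{P}_{i,n}$ is proper, this forces $\mathcal{P}_{i,n}$ to be maximal. The two ingredients I will combine are Proposition \ref{prop:omegasgenerate}, which gives $\thick^\otimes_{\mathcal{D}_n}(\{W_1,\ldots,W_n\})=\mathcal{D}_n$, and Theorem \ref{thm:equivalence}, which provides a monoidal equivalence $\mathcal{K}_i:=\thick^\otimes_{\mathcal{D}_n}(W_i)\cong\stmod kC_p$. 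Since $\Proj H^*(C_p,k)$ is a single point, the Benson--Carlson--Rickard classification (together with the observation that any thick tensor ideal of $\stmod kC_p$ whose radical is everything must contain the tensor unit $k$) shows that $\stmod kC_p$ has only the two thick tensor ideals $0$ and itself; the same then holds for $\mathcal{K}_i$.

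Fix $X\in\mathcal{I}\setminus\mathcal{P}_{i,n}$. The first and most delicate step is to show that $X\otimes W_i\neq 0$ in $\mathcal{D}_n$. I would do this by introducing the auxiliary collection
\[
\mathcal{Z}=\{Z\in\mathcal{D}_n\mid X\otimes Z\in\mathcal{P}_{i,n}\},
\]
which is easily checked to be a thick tensor ideal of $\mathcal{D}_n$, using exactness of $X\otimes-$ together with the fact that $\mathcal{P}_{i,n}$ is itself a thick tensor ideal. For each $j\neq i$ one has $W_j\in\mathcal{P}_{i,n}$, hence $X\otimes W_j\in\mathcal{P}_{i,n}$ and $W_j\in\mathcal{Z}$. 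If on top of this we had $X\otimes W_i=0\in\mathcal{P}_{i,n}$, then $W_i\in\mathcal{Z}$ as well, Proposition \ref{prop:omegasgenerate} would force $\mathcal{Z}=\mathcal{D}_n$, and in particular $\mathbbold{1}_n\in\mathcal{Z}$ would give $X=X\otimes\mathbbold{1}_n\in\mathcal{P}_{i,n}$, contrary to the choice of $X$.

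The non-zero object $X\otimes W_i$ now lies simultaneously in $\mathcal{I}$ (as $\mathcal{I}$ is a tensor ideal containing $X$) and in $\mathcal{K}_i$ (as $\mathcal{K}_i$ is a tensor ideal containing $W_i$), so $\mathcal{I}\cap\mathcal{K}_i$ is a non-zero thick tensor ideal of $\mathcal{K}_i$. The dichotomy for thick tensor ideals of $\mathcal{K}_i\cong\stmod kC_p$ recorded in the first paragraph then forces $\mathcal{I}\cap\mathcal{K}_i=\mathcal{K}_i$, and in particular $W_i\in\mathcal{I}$. Since $W_j\in\mathcal{P}_{i,n}\subseteq\mathcal{I}$ for each $j\neq i$, all of $W_1,\ldots,W_n$ lie in $\mathcal{I}$, and one last invocation of Proposition \ref{prop:omegasgenerate} yields $\mathcal{I}=\mathcal{D}_n$, as required.

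The main obstacle is the construction of the test ideal $\mathcal{Z}$ in the first step. Lemma \ref{lemma:cosyzygytensors} only describes tensor products among the $W_j$ themselves, whereas what we need is information about $X\otimes W_i$ for a general $X\notin\mathcal{P}_{i,n}$; packaging the desired property in $\mathcal{Z}$ and invoking generation of the whole category by $\{W_1,\ldots,W_n\}$ is exactly what converts the abstract generating statement into the required concrete orthogonality. Beyond this point the hypothesis $G=C_p$ enters only through the triviality of the thick tensor ideal lattice of $\stmod kC_p$.
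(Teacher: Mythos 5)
Your proof is correct, and it takes a genuinely different route from the one in the paper. The paper argues by induction on $n$: the base case $n=i$ is handled exactly as in your first paragraph (passing to the Verdier quotient $\mathcal{D}_i/\mathcal{D}_{i-1}\cong\mathcal{K}_i\cong\stmod kC_p$ and using that the latter has only two thick tensor ideals), and the inductive step for $n>i$ is carried out by tensoring the $R_n$-split sequence of Lemma \ref{lemma:themagicsequence} with a chosen $X\notin\mathcal{P}_{i,n}$ and chasing the resulting triangle to reduce to the maximality of $\mathcal{P}_{i,n-1}$. You instead work directly in $\mathcal{D}_n$: given $\mathcal{I}\supsetneq\mathcal{P}_{i,n}$ and $X\in\mathcal{I}\setminus\mathcal{P}_{i,n}$, you introduce the test ideal $\mathcal{Z}=\{Z\mid X\otimes Z\in\mathcal{P}_{i,n}\}$ (which is indeed a thick tensor ideal, since $X\otimes-$ is exact and $\mathcal{P}_{i,n}$ is thick and $\otimes$-ideal), use Proposition \ref{prop:omegasgenerate} to deduce $X\otimes W_i\notin\mathcal{P}_{i,n}$, and then observe that the non-zero object $X\otimes W_i$ sits in $\mathcal{I}\cap\mathcal{K}_i$, which is a thick tensor ideal of $\mathcal{K}_i\cong\stmod kC_p$ and hence equals $\mathcal{K}_i$; this puts $W_i$, and therefore all $W_j$, into $\mathcal{I}$. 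Your approach dispenses with the induction entirely by leaning harder on the semi-orthogonal decomposition of Corollary \ref{cor:decomposition} (needed to identify $\mathcal{K}_i=\thick^\otimes_{\mathcal{D}_n}(W_i)$ inside $\mathcal{D}_n$, not just inside $\mathcal{D}_i$, and to have a monoidal equivalence $\mathcal{K}_i\cong\mathcal{D}_1$ for every $i$, not only $i=n$); the paper's inductive argument is, conversely, more hands-on and uses Lemma \ref{lemma:themagicsequence} a second time in the inductive step, where you invoke only the already-packaged consequences. Both proofs ultimately rest on the same two pillars --- orthogonality of the $W_j$ (Lemma \ref{lemma:cosyzygytensors}, via Proposition \ref{prop:omegasgenerate}) and the triviality of the tensor-ideal lattice of $\stmod kC_p$ --- so this is a reorganisation rather than a fundamentally new idea, but it is a clean one and worth noting that the auxiliary ideal $\mathcal{Z}$ replaces the explicit triangle manipulation of the paper's inductive step.
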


\begin{proof}
We fix $i$ and proceed by induction on $n$. For the base case $n=i$
we need to show that
$\thick_{\mathcal{D}_i}^\otimes(W_1,\ldots,W_{i-1})=\mathcal{D}_{i-1}$
is maximal in $\mathcal{D}_i$.
Recall that the thick tensor ideals in $\mathcal{D}_i$ containing
$\mathcal{D}_{i-1}$ are in bijection with those in the quotient
$\mathcal{D}_i/\mathcal{D}_{i-1}$.
By the discussion in Section \ref{sec:localisation}, that quotient is
tensor equivalent to $\mathcal{K}_i$, which in turn is equivalent to
\[
  \mathcal{D}_1=\strel kC_p=\stmod kC_p
\]
by Theorem \ref{thm:equivalence}.
It is known that the rightmost category has precisely two tensor ideals,
namely the zero ideal and the entire category.
It follows that the only tensor ideal in $\mathcal{D}_i$ properly
containing $\mathcal{D}_{i-1}$ is $\mathcal{D}_i$ itself,
so $\mathcal{D}_{i-1}$ is maximal as claimed.

Now let $n>i$, assume that $\mathcal{P}_{i,n-1}$ is maximal in
$\mathcal{D}_{n-1}$ and choose $X\notin\mathcal{P}_{i,n}$.
Tensoring the short exact sequence of Lemma \ref{lemma:themagicsequence}
with $X$ produces a triangle
\[
\xymatrix{
X\otimes\mathbbold{1}_{n-1} \ar@{>}[r] & X\otimes W_n \ar@{>}[r] &
\Sigma X \ar@{>}[r] &
}
\]
in $\mathcal{D}_n$.
The middle term lies in $\mathcal{P}_{i,n}$, but the right hand term does not.
This implies that $X\otimes\mathbbold{1}_{n-1}$ does not lie in
$\mathcal{P}_{i,n}$.
In particular, it cannot lie in $\mathcal{P}_{i,n-1}$ since the latter is
con-
tained in the former.
By the inductive hypothesis on maximality, this means that
\[
\mathbbold{1}_{n-1}\in
\thick_{\mathcal{D}_{n-1}}^\otimes
(\{W_1,\ldots,W_{n-1},X\otimes\mathbbold{1}_{n-1}\}\setminus\{W_i\}).
\]
Now consider the triangle
\[
\xymatrix{
\mathbbold{1}_{n-1} \ar@{>}[r] & W_n \ar@{>}[r] &
\Sigma\mathbbold{1}_n \ar@{>}[r] &
}
\]
in $\mathcal{D}_n$ induced by the short exact sequence of
Lemma \ref{lemma:themagicsequence}.
By the above remarks, the left two terms lie in
$\thick_{\mathcal{D}_n}^\otimes(\{W_1,\ldots,W_n,X\}\setminus\{W_i\})$,
whence so does the right hand term.
In other words
\[
\mathbbold{1}_n\in\thick_{\mathcal{D}_n}^\otimes
(\{W_1,\ldots,W_n,X\}\setminus\{W_i\}), 
\]
proving that $\mathcal{P}_{i.n}$ is maximal.
\end{proof}

It now follows from \cite[Proposition 2.3]{Balmer:2005a} that each
$\mathcal{P}_{i,n}$ is a prime tensor ideal, i.e., gives a point in
$\Spc\mathcal{D}_n$. 

\begin{lemma}\label{lemma:minimalprimes}
Each $\mathcal{P}_{i,n}$ is a minimal prime.
\end{lemma}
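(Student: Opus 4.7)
The strategy is to combine Theorem~\ref{thm:maintheorem} with the classification of thick tensor ideals in $\stmod kC_p$ to pin down the entire spectrum of $\mathcal{D}_n$, after which minimality of each $\mathcal{P}_{i,n}$ will fall out by a counting/incomparability argument. The key external input is that for $G=C_p$ the category $\stmod kC_p$ admits only two tensor ideals---the zero ideal and the whole category---so $\Spc(\stmod kC_p)$ is a single point. Plugging this into Theorem~\ref{thm:maintheorem}, I would conclude that $\Spc\mathcal{D}_n$ consists of exactly $n$ points.

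Next I would verify that $\mathcal{P}_{1,n},\ldots,\mathcal{P}_{n,n}$ furnish $n$ pairwise distinct prime tensor ideals. They are prime by Lemma~\ref{lemma:maximalideal} combined with \cite[Proposition~2.3]{Balmer:2005a}, which says that every maximal proper tensor ideal is automatically prime. To check distinctness, I would observe that for $j\neq i$ the object $W_i$ lies in $\mathcal{P}_{j,n}$ by construction, whereas $W_i$ cannot belong to $\mathcal{P}_{i,n}$: if it did, then $\mathcal{P}_{i,n}$ would contain all the $W_\ell$'s, and Proposition~\ref{prop:omegasgenerate} would force $\mathcal{P}_{i,n}=\mathcal{D}_n$, contradicting Lemma~\ref{lemma:idealsareproper}.

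Having $n$ distinct primes inside a spectrum with exactly $n$ points means that $\{\mathcal{P}_{1,n},\ldots,\mathcal{P}_{n,n}\}$ exhausts $\Spc\mathcal{D}_n$. The same inclusion/non-inclusion observation used to prove distinctness shows, moreover, that these primes are pairwise incomparable: whenever $j\neq i$ we have $W_i\in\mathcal{P}_{j,n}$ while $W_i\notin\mathcal{P}_{i,n}$, so $\mathcal{P}_{j,n}\not\subseteq\mathcal{P}_{i,n}$. Therefore no prime of $\mathcal{D}_n$ is properly contained in $\mathcal{P}_{i,n}$, which is exactly the statement that $\mathcal{P}_{i,n}$ is a minimal prime. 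I do not anticipate a real obstacle: the substantive work (the triviality of $\Spc(\stmod kC_p)$ and the global shape of $\Spc\mathcal{D}_n$) is already in hand, and the remaining argument is essentially bookkeeping using the previous lemmas of this section.
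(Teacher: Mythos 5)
Your argument is correct, but it takes a genuinely different route from the paper, and the difference matters for the section's purpose. The paper proves Lemma~\ref{lemma:minimalprimes} by a purely local contradiction argument that never touches Theorem~\ref{thm:maintheorem}: assume $\mathcal{P}\subsetneq\mathcal{P}_{i,n}$ is prime, observe that some generator $W_j$ ($j\neq i$) must lie outside $\mathcal{P}$ (else the generators of $\mathcal{P}_{i,n}$ would all be in $\mathcal{P}$, forcing $\mathcal{P}_{i,n}\subseteq\mathcal{P}$), invoke the second statement of Proposition~\ref{prop:omegasgenerate} to conclude $W_i\in\mathcal{P}\subseteq\mathcal{P}_{i,n}$, and then use the first statement of that proposition plus Lemma~\ref{lemma:idealsareproper} to derive a contradiction. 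Your proof instead imports the count $|\Spc\mathcal{D}_n|=n$ from Theorem~\ref{thm:maintheorem}, identifies the $\mathcal{P}_{i,n}$ as $n$ distinct primes (using Lemma~\ref{lemma:maximalideal} and the $W_i\notin\mathcal{P}_{i,n}$ observation), and then reads off minimality from pairwise incomparability. Both arguments are sound, and your incomparability step is exactly the same observation the paper uses to get $W_i\in\mathcal{P}$ in its contradiction. However, the stated aim of Section~\ref{sec.cyclic} is a ``direct computation of the spectrum, verifying Theorem~\ref{thm:maintheorem} in this case''; leaning on Theorem~\ref{thm:maintheorem} to prove one of the lemmas that feeds the final verification theorem weakens that verification's independence, even though there is no actual circularity (the final theorem is not used in the proof of Theorem~\ref{thm:maintheorem}). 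The paper's local approach keeps Section~\ref{sec.cyclic} self-contained modulo the structural results of Sections~\ref{sec:localisation}--\ref{sec.monoidal}; yours is shorter but pays for it by reusing the very result the section is meant to corroborate. Notice also that, once you know $\mathcal{P}_{1,n},\ldots,\mathcal{P}_{n,n}$ exhaust the spectrum, minimality follows immediately from maximality alone (distinct maximal ideals are pairwise incomparable), so the extra incomparability check via $W_i$ is not really doing new work in your version---it is the paper's version that needs it.
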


\begin{proof}
Suppose there exists a prime ideal $\mathcal{P}$ properly contained in
$\mathcal{P}_{i,n}$.
Then there is an object in the set $\{W_1,\ldots,W_n\}\setminus\{W_i\}$
not contained in $\mathcal{P}$.
Since $\mathcal{P}$ is prime, Proposition \ref{prop:omegasgenerate}
forces us to have $W_i\in\mathcal{P}$.
But this implies that $W_i\in\mathcal{P}_{i,n}$,
so $\{W_1,\ldots,W_n\}\subseteq\mathcal{P}_{i,n}$.
Proposition \ref{prop:omegasgenerate} now tells us that
$\mathcal{P}_{i,n}=\mathcal{D}_n$, contradicting
Lemma \ref{lemma:idealsareproper}.
\end{proof}

We are now ready to give a direct computation of the spectrum,
verifying Theorem \ref{thm:maintheorem} in this case.

\begin{theorem}
  The prime ideal spectrum of $\mathcal{D}_n=\strel\, R_nC_p$ is a
  disjoint union of $n$ points.
\end{theorem}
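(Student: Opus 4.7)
The plan is to verify that the ideals $\mathcal{P}_{1,n},\ldots,\mathcal{P}_{n,n}$ are precisely the $n$ points of $\Spc\mathcal{D}_n$ and that the induced topology is discrete. The deep technical work is already in place in Lemmas \ref{lemma:idealsareproper}--\ref{lemma:minimalprimes}, so the theorem will follow by assembling them.

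First I would note that each $\mathcal{P}_{i,n}$ is prime by combining Lemma \ref{lemma:idealsareproper} and Lemma \ref{lemma:maximalideal} with \cite[Proposition 2.3]{Balmer:2005a}, as the paper already observes before Lemma \ref{lemma:minimalprimes}. To see these primes are pairwise distinct, I would argue that $W_i \notin \mathcal{P}_{i,n}$: if $W_i$ did belong to $\mathcal{P}_{i,n}$, then together with the defining generators $\{W_j : j \neq i\}$ the ideal $\mathcal{P}_{i,n}$ would contain every $W_j$, and Proposition \ref{prop:omegasgenerate} would force $\mathcal{P}_{i,n} = \mathcal{D}_n$, contradicting Lemma \ref{lemma:idealsareproper}. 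Since $W_i \in \mathcal{P}_{j,n}$ for $j \neq i$ by definition, the $\mathcal{P}_{i,n}$ are mutually distinct.

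Next I would show that every $\mathcal{P} \in \Spc\mathcal{D}_n$ is one of the $\mathcal{P}_{i,n}$. Proposition \ref{prop:omegasgenerate} guarantees that $\mathcal{P}$ contains at least $n-1$ of the cosyzygies $W_j$, and properness of $\mathcal{P}$ rules out containing all $n$ (lest $\mathcal{P} = \mathcal{D}_n$ by the same proposition). Hence there is a unique index $i$ with $W_i \notin \mathcal{P}$ and $W_j \in \mathcal{P}$ for every $j \neq i$, giving $\mathcal{P}_{i,n} \subseteq \mathcal{P} \subsetneq \mathcal{D}_n$; maximality of $\mathcal{P}_{i,n}$ from Lemma \ref{lemma:maximalideal} then forces $\mathcal{P} = \mathcal{P}_{i,n}$.

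Finally, the computation just performed also yields $\supp(W_i) = \{\mathcal{P}_{i,n}\}$, so every singleton in $\Spc\mathcal{D}_n$ is closed, and finiteness of the point set makes the topology discrete. No step of this theorem is a genuine obstacle in itself; the real difficulty was upstream, in the inductive maximality argument of Lemma \ref{lemma:maximalideal}, which crucially relied on the fact that $\stmod kC_p$ has only two thick tensor ideals.
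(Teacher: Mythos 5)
Your argument for the underlying point set is exactly the paper's: by Proposition \ref{prop:omegasgenerate} every prime $\mathcal{P}$ contains all but one $W_j$, hence $\mathcal{P}_{i,n}\subseteq\mathcal{P}$ for some $i$, and maximality (Lemma \ref{lemma:maximalideal}) plus properness forces equality; the explicit check that the $\mathcal{P}_{i,n}$ are pairwise distinct is a useful added remark. Where you differ slightly is in establishing discreteness: the paper invokes \cite[Proposition 2.9]{Balmer:2005a}, which says closed points are exactly the minimal primes, together with Lemma \ref{lemma:minimalprimes}, whereas you observe directly that $\supp(W_i)=\{\mathcal{P}_{i,n}\}$ and that supports are closed sets, then use finiteness. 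Your route bypasses Lemma \ref{lemma:minimalprimes} and Balmer's proposition entirely, which is a modest simplification; the content is equivalent since a point is closed precisely when the corresponding prime is minimal, so you are in effect re-deriving the minimality by hand via the support computation rather than deducing it abstractly. Both versions are correct.
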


\begin{proof}
If $\mathcal{P}$ is an element of $\Spc\mathcal{D}_n$, then by
Proposition \ref{prop:omegasgenerate} there is an integer $1\leq i\leq n$
such that $\{W_1,\ldots,W_n\}\setminus\{W_i\}\subseteq\mathcal{P}$.
We then have $\mathcal{P}_{i,n}\subseteq\mathcal{P}$.
Since $\mathcal{P}_{i,n}$ is maximal, this implies that
$\mathcal{P}=\mathcal{P}_{i,n}$.
It follows that
$\Spc\mathcal{D}_n=\{\mathcal{P}_{1,n},\ldots,\mathcal{P}_{n,n}\}$
as a set.
We now recall from \cite[Proposition 2.9]{Balmer:2005a} that
the closed points in the prime ideal spectrum are precisely the
\emph{minimal} primes.
Lemma \ref{lemma:minimalprimes} therefore informs us that each point
$\mathcal{P}_{i,n}$ is closed in the topology of $\Spc\mathcal{D}_n$.
\end{proof}

We now make a few observations based on and related to the theorem;
all of the statements are more or less trivial consequences of what we
have already done, but are perhaps worth making explicit.

\begin{corollary}
Each $W_i=\Omega_i^{-1}k$ is supported at a single point, namely
\[
\supp(W_i)=\{\mathcal{P}_{i,n}\}.
\]
\end{corollary}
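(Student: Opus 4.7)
The plan is to combine the explicit enumeration of $\Spc\mathcal{D}_n$ from the preceding theorem with the definition of $\mathcal{P}_{i,n}$ and the properness statement in Lemma \ref{lemma:idealsareproper}. By definition $\supp(W_i) = \{\mathcal{P}\in\Spc\mathcal{D}_n\mid W_i\notin\mathcal{P}\}$, so the question reduces to deciding, for each $j\in\{1,\ldots,n\}$, whether $W_i$ belongs to $\mathcal{P}_{j,n}$.

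First I would handle the easy direction $j\neq i$. By construction
\[
\mathcal{P}_{j,n} = \thick_{\mathcal{D}_n}^\otimes(\{W_1,\ldots,W_n\}\setminus\{W_j\}),
\]
and when $j\neq i$ the module $W_i$ is one of the listed generators, so trivially $W_i\in\mathcal{P}_{j,n}$ and $\mathcal{P}_{j,n}\notin\supp(W_i)$.

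Next I would rule out the remaining case $j=i$. If one had $W_i\in\mathcal{P}_{i,n}$, then $\mathcal{P}_{i,n}$ would contain the entire set $\{W_1,\ldots,W_n\}$, and hence by Proposition \ref{prop:omegasgenerate} (which says these objects generate $\mathcal{D}_n$ as a thick tensor ideal) we would conclude $\mathcal{P}_{i,n}=\mathcal{D}_n$, contradicting Lemma \ref{lemma:idealsareproper}. Therefore $W_i\notin\mathcal{P}_{i,n}$ and $\mathcal{P}_{i,n}\in\supp(W_i)$.

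Combining these two observations with the enumeration $\Spc\mathcal{D}_n=\{\mathcal{P}_{1,n},\ldots,\mathcal{P}_{n,n}\}$ from the theorem yields $\supp(W_i)=\{\mathcal{P}_{i,n}\}$. There is no real obstacle here; the work has all been done in establishing Proposition \ref{prop:omegasgenerate}, Lemma \ref{lemma:idealsareproper}, and the classification of prime ideals, so the corollary is a direct bookkeeping exercise.
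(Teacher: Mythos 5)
Your proof is correct and follows the same approach as the paper, which simply invokes the computation of the spectrum and the definition of support. You have usefully spelled out the bookkeeping — that $W_i$ is a listed generator of $\mathcal{P}_{j,n}$ for $j\neq i$, while $W_i\notin\mathcal{P}_{i,n}$ by the argument via Proposition~\ref{prop:omegasgenerate} and Lemma~\ref{lemma:idealsareproper} (the same argument that appears in the proof of Lemma~\ref{lemma:minimalprimes}) — but the reasoning is identical to what the paper leaves implicit.
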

\begin{proof}
Given the computation of the spectrum and the definition of support,
we have
\[
\supp(W_i)=\{\mathcal{P}\in\Spc\mathcal{D}_n\mid W_i\notin\mathcal{P}\}=
\{\mathcal{P}_{i,n}\}. \qedhere
\]
\end{proof}

\begin{corollary}
The base change functor
\[
\xymatrix{
-\otimes\mathbbold{1}_{m}\colon\mathcal{D}_n \ar@{>}[r] & \mathcal{D}_m
}
\]
induces an embedding $\Spc\mathcal{D}_m\hookrightarrow\Spc\mathcal{D}_n$
with image $\{\mathcal{P}_{i,n}\mid 1\leq i \leq m\}$.
\end{corollary}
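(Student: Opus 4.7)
The plan is to invoke Balmer's functoriality of $\Spc$: any tensor-triangulated functor $f\colon\mathcal{S}\to\mathcal{T}$ induces a continuous map $\Spc f\colon\Spc\mathcal{T}\to\Spc\mathcal{S}$ sending $\mathcal{P}$ to $f^{-1}(\mathcal{P})$. Applying this to $-\otimes\mathbbold{1}_m\colon\mathcal{D}_n\to\mathcal{D}_m$ (which is monoidal, since $\mathbbold{1}_m$ is the tensor unit of $\mathcal{D}_m$), I obtain a map $\Spc\mathcal{D}_m\to\Spc\mathcal{D}_n$, and my task reduces to computing it on the finitely many points $\mathcal{P}_{i,m}$ classified by the preceding theorem.

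First I would compute the effect of $-\otimes\mathbbold{1}_m$ on each generator $W_j$, for $1\le j\le n$. If $j\le m$, then $W_j$ is already annihilated by $t^m$, hence naturally an $R_m$-module, so $W_j\otimes\mathbbold{1}_m\cong W_j$ in $\mathcal{D}_m$. If $j>m$, then iterating Lemma~\ref{lemma:omegabasechange} one obtains
\[
W_j\otimes_{R_n}R_m\cong (\cdots(W_j\otimes_{R_j}R_{j-1})\cdots)\otimes_{R_{m+1}}R_m\cong A_m,
\]
which is weakly projective, hence zero in $\mathcal{D}_m$.

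Next I would compute $(-\otimes\mathbbold{1}_m)^{-1}(\mathcal{P}_{i,m})$ for $1\le i\le m$. Since $\mathcal{P}_{i,m}$ contains $W_j$ for all $j\le m$ with $j\ne i$ and contains $0$ (hence $W_j$ for every $j>m$), the preimage contains the entire set $\{W_1,\dots,W_n\}\setminus\{W_i\}$, whence it contains $\mathcal{P}_{i,n}$. Conversely, $W_i\otimes\mathbbold{1}_m\cong W_i\notin\mathcal{P}_{i,m}$, so $W_i$ is not in the preimage. Since by the preceding theorem every point of $\Spc\mathcal{D}_n$ is of the form $\mathcal{P}_{j,n}$ for some $j$, and $\mathcal{P}_{i,n}$ is the unique such prime missing $W_i$ (by Proposition~\ref{prop:omegasgenerate}), the pullback map sends $\mathcal{P}_{i,m}\mapsto\mathcal{P}_{i,n}$.

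This already identifies the image as $\{\mathcal{P}_{i,n}\mid 1\le i\le m\}$ and shows the map is injective on the finite underlying set. For the embedding statement, I note that $\Spc\mathcal{D}_m$ and $\Spc\mathcal{D}_n$ are both finite discrete spaces (every point is closed, by Lemma~\ref{lemma:minimalprimes} and the preceding theorem), so any injection is automatically a topological embedding. The only real content of the argument is the computation of the base change on the $W_j$, but this is precisely the content of Lemma~\ref{lemma:omegabasechange} together with the observation that $W_j$ is an $R_m$-module for $j\le m$; no further obstacle is expected.
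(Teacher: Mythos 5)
Your proof is correct, but it takes a genuinely different route from the paper's. The paper invokes Balmer's Proposition~3.11 directly: the base change $-\otimes\mathbbold{1}_m\colon\mathcal{D}_n\to\mathcal{D}_m$ is (up to equivalence) a Verdier quotient by the localisation-sequence setup of Section~\ref{sec:localisation}, so $\Spc(-\otimes\mathbbold{1}_m)$ is automatically a homeomorphism onto the subspace of primes in $\Spc\mathcal{D}_n$ containing the kernel, and the paper then computes that kernel as $\thick^\otimes(\{W_i\mid m+1\le i\le n\})$ by repeated application of Lemma~\ref{lemma:kernelgenerated}. You instead compute the pullback map pointwise on the finitely many primes $\mathcal{P}_{i,m}$, using the effect of base change on each generator $W_j$ (trivial identity for $j\le m$, collapse to $A_m$ for $j>m$ via iterating Lemma~\ref{lemma:omegabasechange}), and then get the embedding for free because both spectra are finite $T_1$ spaces, hence discrete. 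Your hands-on argument is more elementary and does not require Balmer's quotient-functor result, but it leans on the explicit classification of primes from the preceding theorem, whereas the paper's approach would identify the image structurally even without knowing the points by name. One minor remark: your phrase ``any injection is automatically a topological embedding'' deserves the one-word justification you in fact gave (discreteness); a finite space with all points closed is discrete, so this step is sound.
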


\begin{proof}
Letting $\phi^*$ denote the base change functor,
it is a general fact (see \cite[Proposition 3.11]{Balmer:2005a}) that
$\Spc(\phi^*)$ is a homeomorphism onto its image.
The latter set consists precisely of those prime
ideals that contain the kernel of $\phi^*$.
It follows by repeated applications of Lemma \ref{lemma:kernelgenerated} that
the kernel of $\phi^*$ is generated by $\{W_i\mid m+1\leq i \leq n\}$.
The result follows immediately.
\end{proof}

In the above corollary, one might also consider the corresponding
base change functors having source $\strel SC_p$.
The result remains true, except for the description of the image.
By invoking \cite[Theorem A.5]{Benson/Iyengar/Krause:2013a},
one may write
\[
\Spc(\strel SC_p)\cong Z_n \coprod \Spc\mathcal{D}_n
\]
for all $n\geq 1$, where $Z_n$ is the spectrum of the kernel.
Thus for any $n\geq 1$ we can find a disjoint union of $n$ points as
an open and closed subspace of the spectrum of $\strel SC_p$.
It would be interesting to fully understand the space $\Spc(\strel SC_p)$ and,
in particular, how the spectra of the $\mathcal{D}_n$ sit inside of it.

\bibliographystyle{amsplain}
\bibliography{repcoh}

\end{document}